\definecolor{darkblue}{rgb}{0,0,0.6}
\theoremstyle{definition}
\theoremstyle{remark}
\let\c@conjecture=\c@theorem
\let\c@corollary=\c@theorem
\let\c@observation=\c@theorem
\let\c@proposition=\c@theorem
\let\c@lemma=\c@theorem
\let\c@definition=\c@theorem
\let\c@example=\c@theorem
\let\c@remark=\c@theorem
\let\c@question=\c@theorem
\let\c@equation\c@theorem
\def\makeautorefname#1#2{\expandafter\def\csname#1autorefname\endcsname{#2}}
\numberwithin{equation}{section}
\newcommand*{\Alphabet}{ABCDEFGHIJKLMNOPQRSTUVWXYZ1234567890}
\newcommand*{\alphabet}{abcdefghijklmnopqrstuvwxyz1234567890}
\newlength\fcaph
\newlength\fdesc
\newlength\factualfontsize
\numberwithin{equation}{section}
\newcounter{commentcounter}
\LetLtxMacro\Oldfootnote\footnote
\titleformat{\subsubsection}[runin]
  {\normalfont\normalsize\bfseries}
  {\thesubsubsection}{5pt}{}[.]
\titlespacing{\subsection}{0pt}{*2.5}{*1.5}
\small\contentslabel{1.5em}}
\small\contentslabel{2.5em}}
\newcommand*\bigcdot{\mathpalette\bigcdot@{.7}}
\newcommand*\bigcdot@[2]{\mathbin{\vcenter{\hbox{\scalebox{#2}{$\m@th#1\bullet$}}}}}
\DeclareFontFamily{U}{mathx}{\hyphenchar\font45}
\DeclareFontShape{U}{mathx}{m}{n}{
      <5> <6> <7> <8> <9> <10>
      <10.95> <12> <14.4> <17.28> <20.74> <24.88>
      mathx10
      }{}
\DeclareSymbolFont{mathx}{U}{mathx}{m}{n}
\let\widecheck\@undefined
\let\widebar\@undefined
\DeclareMathAccent{\widecheck}{\mathord}{mathx}{"71}
\DeclareMathAccent{\widebar}{\mathord}{mathx}{"73}
\DeclareSymbolFont{stmry}{U}{stmry}{m}{n}
\let\llbracket\@undefined
\let\rrbracket\@undefined
\DeclareMathDelimiter{\llbracket}{\mathopen}%
                     {stmry}{"4A}{stmry}{"71}
\DeclareMathDelimiter{\rrbracket}{\mathclose}%
                     {stmry}{"4B}{stmry}{"79}
\newcommand{\colim@}[2]{%
  \vtop{\m@th\ialign{##\cr
    \hfil$#1\operator@font colim$\hfil\cr
    \noalign{\nointerlineskip\kern1.5\ex@}#2\cr
    \noalign{\nointerlineskip\kern-\ex@}\cr}}%
}
\newcommand{\colim}{%
  \mathop{\mathpalette\colim@{\rightarrowfill@\textstyle}}\nmlimits@
}
\tikzset{
math to/.tip={Glyph[glyph math command=rightarrow]},
loop/.tip={Glyph[glyph math command=looparrowleft, swap]},
weird/.tip={Glyph[glyph math command=Rrightarrow, glyph length=1.5ex]},
pi/.tip={Glyph[glyph math command=pi, glyph length=1.5ex, glyph axis=0pt]},
}
\newcommand{\topint}{\mathring} 
\newcommand\rsmraise[1]{%
  \ifx#1\displaystyle .5\else
    \ifx#1\textstyle .5\else
      \ifx#1\scriptstyle .3\else
        .45%
      \fi
    \fi
  \fi}
\theoremstyle{plain}
\newtheorem{thm}{Theorem}[section]
\newtheorem{coro}[thm]{Corollary}
\newtheorem{lem}[thm]{Lemma}
\newtheorem{prop}[thm]{Proposition}
\theoremstyle{definition}
\newtheorem{define}[thm]{Definition}
\newtheorem{exa}[thm]{Example}
\theoremstyle{remark}
\newtheorem*{cla}{Claim}
\newtheorem*{rem}{Remark}
\title{
Topological manifolds are locally flat Euclidean Neighbourhood Retracts
}
\author{Raphael Floris}
\address{Universit\"{a}t Bonn}
\email{s6raflor@uni-bonn.de}
\begin{document}

\begin{abstract}
We show that every topological $n$-manifold $M$ admits a locally flat closed embedding $\iota\colon M \hookrightarrow \mathbb{R}^{2n+1}$ and is a retract of some neighbourhood $U \subseteq \mathbb{R}^{2n+1}$.
\end{abstract}

\maketitle

\section{Introduction}
\vspace*{0.5cm}
The 19\textsuperscript{th} century conception of a \textit{manifold}, a term first coined by Bernhard Riemann and used extensively in his seminal work \cite{Rie19}, was a turning point for the fields of study that can best be subsumed under the term \textit{geometry}. \\ Two viewpoints on manifolds consolidated at the beginning of the 20\textsuperscript{th} century. An \textit{extrinsic manifold} $M$ resides inside an ambient Euclidean $N$-space (in modern terminology, we would say that $M$ is a \textit{submanifold} of $\mathbb{R}^N$). The \textit{intrinsic} viewpoint, i.e. the definition of a manifold via charts, was first formalized by Hermann Weyl in \cite{Wey13}. It was Hassler Whitney, who showed in \cite{Whi36} that the extrinsic and the intrinsic point of view are equivalent in the case of smooth manifolds. 
\begin{thm}[Weak Whitney Embedding Theorem]

Every smooth $n$-manifold (with or without boundary) $M$ admits a closed smooth embedding $\iota \colon M \hookrightarrow \mathbb{R}^{2n+1}$.
\end{thm}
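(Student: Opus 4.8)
The plan is to realise $M$ first as a closed submanifold of a high-dimensional Euclidean space, and then to bring the dimension down to $2n+1$ by projecting away from generic directions. Throughout I take ``manifold'' to include second countability, so that each component of $M$ is $\sigma$-compact.

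\emph{Step 1: a closed embedding into a large $\mathbb{R}^N$.} I would begin from a countable, locally finite atlas $\{(U_i,\varphi_i)\}_{i\in I}$ of $M$, with each $\varphi_i$ taking values in $\mathbb{R}^n$ — or in a half-space $\mathbb{R}^{n-1}\times[0,\infty)$ along $\partial M$ — together with a subordinate smooth partition of unity $\{\lambda_i\}$. To keep the eventual target finite-dimensional, I would first recolour the atlas: partition $I$ into finitely many classes $I_1,\dots,I_r$ (with $r\le n+1$, using that an $n$-manifold has covering dimension $n$) so that within each class the chart domains $U_i$ are pairwise disjoint, and set $F_j=\sum_{i\in I_j}\lambda_i\varphi_i$ and $G_j=\sum_{i\in I_j}\lambda_i$, which are then well defined on all of $M$. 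With $\tau\colon M\to[0,\infty)$ a proper smooth function — produced by the usual partition-of-unity recipe, and which I also take to be Morse — the map
\[
  \iota_0=(F_1,\dots,F_r,\,G_1,\dots,G_r,\,\tau)\colon M\longrightarrow \mathbb{R}^{rn}\times\mathbb{R}^r\times\mathbb{R}=:\mathbb{R}^N
\]
is an injective immersion, and it is proper because its last coordinate is; so $\iota_0$ is already a closed embedding, only into the wrong Euclidean space.

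\emph{Step 2: lowering the dimension.} Suppose I have a closed embedding $\iota=(g,\tau)\colon M\hookrightarrow\mathbb{R}^{m}\times\mathbb{R}$ with $\tau$ the proper Morse function from Step~1 and $m\ge 2n+1$. For a line $\ell\in\mathbb{RP}^{m-1}$ let $\pi_\ell\colon\mathbb{R}^m\to\mathbb{R}^m/\ell\cong\mathbb{R}^{m-1}$ be the quotient projection and put $\iota_\ell=(\pi_\ell\circ g,\tau)$. Since the proper coordinate $\tau$ is left untouched, $\iota_\ell$ stays proper, so it will be enough to find $\ell$ making $\iota_\ell$ an injective immersion and then to repeat until $m=2n$. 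The map $\iota_\ell$ fails to be injective only if $\ell$ contains $g(x)-g(y)$ for some $x\neq y$ with $\tau(x)=\tau(y)$ (and there $g(x)\neq g(y)$, since $(g,\tau)$ is injective); and it fails to be an immersion only if $\ell$ contains $dg_x(w)$ for some nonzero $w\in\ker d\tau_x$. In the first case the relevant pairs $(x,y)$ sweep out a subset of $M\times M\smallsetminus\Delta$ of dimension $\le 2n-1$; in the second, Morseness of $\tau$ prevents its critical set from inflating the count, so the relevant $(x,w)$ sweep out a set of dimension $\le 2n-1$ as well. Either way the ``bad'' lines are the image of a smooth map from a manifold of dimension $\le 2n-1<m-1=\dim\mathbb{RP}^{m-1}$, so by Sard's theorem the bad set has measure zero in $\mathbb{RP}^{m-1}$, and almost every $\ell$ works.

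\emph{Step 3: assembling, and the boundary.} Iterating Step~2 starting from the $\iota_0$ of Step~1 terminates at a closed embedding $M\hookrightarrow\mathbb{R}^{2n+1}$. Having $\partial M\neq\varnothing$ changes nothing: the half-space charts are harmless, and Sard's theorem still controls the maps in Step~2, whose domains are manifolds with boundary whose boundary strata only have smaller dimension.

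The step I expect to be the genuine obstacle is \emph{keeping the embedding closed} while shrinking the dimension: a careless generic projection of a non-compact submanifold readily loses properness — picture a curve in $\mathbb{R}^2$ running to infinity along a line asymptotic to the projection direction, whose shadow fails to be closed. Smuggling a proper coordinate into the picture in Step~1 and then refusing to disturb it is exactly what defuses this; for compact $M$ the problem disappears outright, since there every injective immersion is automatically a closed embedding, and one may even drop the $\tau$-coordinate and project using the bare secant and tangent counts $2n<m-1$ and $2n-1<m-1$. The only other delicate point, and it is entirely routine, is the colouring of the atlas in Step~1 that keeps $N$ finite.
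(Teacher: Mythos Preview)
The paper does not prove this theorem: it is stated in the introduction as a classical result of Whitney, with a citation to \cite{Whi36}, purely to motivate the topological analogue (Theorem~1.2) that is the paper's actual subject. So there is no proof in the paper to compare your proposal against.

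Your argument is the standard one and is essentially correct. The two ideas you flag as delicate are indeed the right ones: (i) the colouring of the atlas into finitely many classes of pairwise-disjoint charts, using that an $n$-manifold has covering dimension $n$, so that the initial target is finite-dimensional; and (ii) preserving properness under projection by carrying an untouched proper coordinate $\tau$. One small quibble: in Step~2 you want the set $\{(x,y)\in M\times M\setminus\Delta : \tau(x)=\tau(y)\}$ to have dimension at most $2n-1$, and Morseness of $\tau$ alone does not quite say this (Morse controls the critical \emph{points}, not transversality of $\tau\times\tau$ to the diagonal in $\mathbb{R}\times\mathbb{R}$). It is cleaner simply to note that $\tau(x)-\tau(y)$ is a submersion away from the set where $d\tau_x=d\tau_y=0$, hence its zero set is a $(2n-1)$-manifold off a set of strictly smaller dimension; either way Sard disposes of the image. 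With that adjustment your projection count gives $2n-1<m-1$, i.e.\ you may project until $m=2n$, landing in $\mathbb{R}^{2n}\times\mathbb{R}=\mathbb{R}^{2n+1}$ as desired.
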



Something more is true. Consider a closed smooth submanifold $M \subseteq \mathbb{R}^N$. Then $M$ possesses a certain \textquotedblleft nice\textquotedblright {} neighbourhood: There is a neighbourhood $U \subseteq \mathbb{R}^N$ of $M$ such that $M$ is a \textit{retract} of $U$, i.e. there exists a continuous map $r\colon U \to M$ such that $r|_M = \mathrm{id}_M$. For the proof, one considers the normal bundle $\pi\colon NM \to M$ of $M$ and constructs an open subset $V \subseteq NM$ such that the map $\theta\colon V \to \mathbb{R}^N, (x,v) \mapsto x + v$ is a diffeomorphism onto its image. The neighbourhood $U = \theta(V)$ is called a \textit{tubular neighbourhood} of $M$. Thus every smooth manifold is a \textit{Euclidean Neighbourhood Retract (ENR)} (see Definition 4.1). For a more detailed account see \cite[Chapter 6]{Lee13}. \vspace*{0.25cm}






The aim of this paper is to present a readable exposition of the proof of the corresponding results for topological manifolds. We want to show that every topological $n$-manifold can be realised as a closed submanifold of $\mathbb{R}^{2n+1}$ that is the retract of some neighbourhood. More concretely:
\begin{thm}
Every topological $n$-manifold $M$ admits a locally flat closed embedding~$\iota\colon M \to \mathbb{R}^{2n+1}$. Furthermore $M$ admits a normal microbundle $\mathfrak{n}_{\iota\colon M \to \mathbb{R}^{2n+1}}$ and is thus an ENR.
\end{thm}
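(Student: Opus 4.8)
The plan is to produce first a locally flat closed embedding of $M$, together with a normal microbundle, into a Euclidean space of some (possibly large) dimension, and only then to cut the dimension down to $2n+1$ by general position. With this in hand the ENR statement is automatic: a normal microbundle of a closed embedding $\iota\colon M\hookrightarrow\mathbb{R}^{2n+1}$ comes by definition with a retraction $p\colon E\to\iota(M)$ of an open neighbourhood $E\subseteq\mathbb{R}^{2n+1}$ onto $\iota(M)$, which is exactly the data of Definition 4.1.

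\emph{Embedding into a large $\mathbb{R}^N$.} Since $M$ is second countable it has covering dimension $n$, so every open cover has a countable locally finite refinement that is the union of $n+1$ discrete families; applied to an atlas this yields chart domains $\{V_\beta\}$ partitioned into colour classes $C_1,\dots,C_{n+1}$, with charts $\psi_\beta\colon V_\beta\hookrightarrow B^n\subseteq\mathbb{R}^n$, a subordinate partition of unity $\{\rho_\beta\}$, and a proper function $\tau\colon M\to[0,\infty)$. For a colour $c$, the discreteness of $C_c$ lets its chart data be assembled into one continuous map
\[
g_c\colon M\to\mathbb{R}^n\times\mathbb{R}\times\mathbb{R},\qquad g_c|_{V_\beta}=(\rho_\beta\psi_\beta,\ \rho_\beta,\ j\,\rho_\beta)\quad(\beta\text{ the }j\text{-th domain of }C_c),
\]
extended by zero off $\bigcup_{\beta\in C_c}V_\beta$. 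Put $\iota_0=(g_1,\dots,g_{n+1},\tau)\colon M\to\mathbb{R}^N$ with $N=(n+1)(n+2)+1$. One recovers $x$ from $\iota_0(x)$: choose a colour whose $\rho$-coordinate is positive, read off the index $j$ as the ratio of the third and second coordinates of that block, then the chart coordinate as $\rho_\beta\psi_\beta/\rho_\beta$; so $\iota_0$ is injective, and it is proper because $\tau$ is, hence a closed embedding. Local flatness is verified at a point $x$: after shrinking to a neighbourhood on which only finitely many $\rho_\beta$ are nonzero and one, $\rho_{\beta_0}$, is positive, dividing the first coordinate of the $\beta_0$-block by the second returns the chart parameter $u=\psi_{\beta_0}$, in terms of which every remaining coordinate of $\iota_0$ is a continuous function; the ambient homeomorphism that performs this division and then subtracts off these functions straightens $\iota_0(M)$ to $\mathbb{R}^n\times\{0\}$.

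\emph{Normal microbundle and dimension reduction.} Because all the local data occurs literally among the coordinates, one can also write down a retraction $r_0$ of a neighbourhood of $\iota_0(M)$ onto $\iota_0(M)$: on the region where a colour $c$ has the strictly largest $\rho$-coordinate, recover the index and the chart parameter $u$ from that block and set $r_0=\iota_0\circ\psi_\beta^{-1}(u)$, interpolating across colours by a partition of unity in the $\rho$-coordinates; in the straightening coordinates above this $r_0$ is a product projection, so it defines a normal microbundle $\mathfrak{n}_0$ of $\iota_0$. To descend to $\mathbb{R}^{2n+1}$ I would iterate: if the current ambient space is $\mathbb{R}^m$ with $m>2n+1$, pick a linear surjection $\pi\colon\mathbb{R}^m\to\mathbb{R}^{m-1}$ whose kernel direction is not tangent to $\iota_0(M)$ at any point, not parallel to a secant chord of $\iota_0(M)$, and not an asymptotic direction of $\iota_0(M)$; the forbidden set is a subset of $S^{m-1}$ of dimension at most $2n<m-1$, so a generic $\pi$ qualifies. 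For such $\pi$ the map $\pi\circ\iota_0$ is again a locally flat closed embedding, and in a product chart for $\mathfrak{n}_0$ the map $\pi$ is a linear change of coordinates away from the product projection, so it carries $\mathfrak{n}_0$ to a normal microbundle of $\pi\circ\iota_0$. After finitely many steps this produces the required $\iota\colon M\hookrightarrow\mathbb{R}^{2n+1}$ with its normal microbundle $\mathfrak{n}_\iota$, hence the ENR structure.

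The main obstacle is everything in the previous paragraph. Locally flat submanifolds need not admit normal microbundles at all, so the microbundle must be built \emph{simultaneously} with the embedding rather than extracted afterwards; and local flatness is not preserved by arbitrary projections, so the reduction step genuinely uses the microbundle. Both difficulties are tamed by the same device: the chart construction is fully coordinatised — the chart, the colour, the index, the chart parameter and the partition value all appear as ambient coordinates of $\iota_0$ — so the required retractions and straightening homeomorphisms are given by explicit formulas whose mutual compatibility is a purely local check.
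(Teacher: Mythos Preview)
Your first step --- the explicit embedding $\iota_0\colon M\hookrightarrow\mathbb{R}^{(n+1)(n+2)+1}$ via a coloured atlas and partition of unity, together with the observation that $\iota_0(M)$ is locally a graph and hence locally flat with an explicit normal retraction --- is fine, and is essentially the construction the paper cites from~\cite{KK}.

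The dimension-reduction step, however, is a genuine gap. You argue that the forbidden directions in $S^{m-1}$ (tangents, secants, asymptotic directions) form a set of dimension at most $2n<m-1$, so a generic kernel avoids them. There are two problems. First, a purely topological submanifold has no well-defined set of tangent \emph{directions} in the ambient $\mathbb{R}^m$; the normal microbundle you built gives a germ of a retraction, not a linear subspace of $\mathbb{R}^m$ at each point, so ``kernel not tangent to $\iota_0(M)$'' has no meaning here. Second, and more fundamentally, the secant-direction map
\[
(M\times M)\setminus\Delta\;\longrightarrow\;S^{m-1},\qquad (x,y)\longmapsto \frac{\iota_0(x)-\iota_0(y)}{\lVert\iota_0(x)-\iota_0(y)\rVert},
\]
is merely continuous, and continuous maps can \emph{raise} topological dimension (Peano curves). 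There is no Sard-type theorem available, and nothing about your $\iota_0$ prevents this: the chart maps $\psi_\beta$ are arbitrary homeomorphisms, so $\iota_0$ has no Lipschitz or differentiable control. Hence you cannot conclude that the secant set misses a generic direction, and the Whitney projection trick simply does not go through in $\TOP$. (This is exactly why the classical N\"obeling--Menger embedding theorem for $n$-dimensional compacta into $\mathbb{R}^{2n+1}$ is proved by Baire category in the mapping space rather than by projection.) The subsequent claims --- that a generic projection preserves local flatness and carries $\mathfrak{n}_0$ to a normal microbundle --- rest on this broken step and on the undefined notion of transversality of the kernel to the microbundle fibres.

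For comparison, the paper takes a completely different route that avoids projection altogether. It uses covering-dimension theory plus a Baire category argument in $C(M,\mathbb{R}^{2n+1})$ to land a proper (but possibly wild) embedding directly in $\mathbb{R}^{2n+1}$, then invokes Miller's theorem~\cite{Mil72} to perturb it to a locally flat one. The normal microbundle is obtained afterwards from Stern's theorem~\cite{Ste75} in codimension $n+1\geq 5$ (and from smoothing theory for $n\leq 3$); the ENR property is derived independently from Hanner's ANR theorem. None of these three black boxes is elementary, but each replaces exactly the piece of smooth technology (Sard, tubular neighbourhoods) that your argument is implicitly trying to use and that is unavailable in $\TOP$.
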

The fact that every compact topological $n$-manifold $M$ admits a locally flat closed embedding $\iota\colon M \hookrightarrow \mathbb{R}^d$ for some positive integer $d$ is well-known. \cite{KK} constructs a closed embedding, not necessarily locally flat, embedding $\iota\colon M \hookrightarrow \mathbb{R}^{(n+1)(n+2)+1}$ for an arbitrary manifold $M$. This paper is the first to construct a locally flat embedding $\iota\colon M \hookrightarrow \mathbb{R}^{2n+1}$, thus proving that $d = 2n+1$ suffices as in the smooth case. Whitney showed in \cite{Whi43} that in the smooth case we can even choose $d = 2n$. It is an interesting question whether Theorem 1.2 holds with $2n$ instead of $2n+1$. \vspace*{0.25cm}

We need \textit{local flatness} to ensure that the image of the embedding is a submanifold of $\mathbb{R}^{2n+1}$. 
\begin{define}
Let $N$ be a topological $n$-manifold without boundary. A subset $S \subseteq N$ is a \emph{locally flat submanifold of} $N$ if there exists some $0 \leq k \leq n$ such that for each $p \in S$, there exists a chart $(U,\varphi) = (U,x^1,\ldots,x^n)$ of $N$ such that $\varphi(p) = 0$ and either
\begin{align*}U \cap S = \{q \in U \; | \; x^{k+1}(q) = \cdots = x^n(q) = 0 \} \end{align*} or 
\begin{align*} U \cap S = \{q \in U \; | \; x^{k}(q) \geq 0 \; \mathrm{and} \;  x^{k+1}(q) = \cdots = x^n(q) = 0 \}. \end{align*}
If $M$ is a topological $m$-manifold (with or without boundary) and $\iota\colon M \hookrightarrow N$ is a topological embedding, we call $\iota$ \emph{locally flat} if $\iota(M)$ is a locally flat submanifold of $N$.
\end{define}

The second part of Theorem 1.2 is essentially due to \cite{Ste75} as we will explain in Section 4. Another way to show that every topological manifold is an ENR uses that every topological manifold is an \textit{Absolute Neighbourhood Retract (ANR)} and is due to \cite{Han51}. We will also explain this in Section 4. Showing the first sentence of Theorem 1.2 is divided into two steps: First we show that $M$ admits a topological proper embedding $\iota_0\colon M \hookrightarrow \mathbb{R}^{2n+1}$ that is possibly not locally flat. The existence of a locally flat closed embedding $\iota\colon M \hookrightarrow \mathbb{R}^{2n+1}$ for $n \geq 2$ then follows essentially from the following result applied to $N = \mathbb{R}^{2n+1}$.
\begin{thm}[{\cite[Theorem 5]{Mil72}}]
Let $M$ be a topological $m$-manifold, let $N$ be a topological $n$-manifold without boundary and let $\varepsilon > 0$. Choose a metric $d$ on $N$ that induces the topology of~$N$. If $\iota_0 \colon M \hookrightarrow N$ is a proper topological embedding and $n - m \leq 3 $, then there exists a proper locally flat embedding $\iota\colon M \hookrightarrow N$ such that $d(\iota_0(p),\iota(p)) < \varepsilon$ for all $p \in M$.
\end{thm}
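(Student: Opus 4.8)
The plan is to use the standard two-layer method for $C^0$-approximation results: an infinite-process reduction to a \emph{local, relative} statement, followed by a proof of that statement, the codimension hypothesis being spent only in the latter. For the first layer I would fix on $M$ a countable, locally finite family of charts $(V_i,\psi_i)$ with $\psi_i(V_i)$ open in $\RR^m$ or $\RR^m_{\ge 0}$, together with shrinkings $W_i \Subset V_i$ with $\{W_i\}$ still covering $M$. Since $N$ is metrizable and $\iota_0$ is proper, a standard bookkeeping argument yields a sequence $\veps_i \searrow 0$, small relative to the local ``thickness'' of the successive images, such that: whenever $(\iota_i)$ is a sequence of proper embeddings with $\iota_i = \iota_{i-1}$ off a compact subset of $V_i$ and $d(\iota_{i-1},\iota_i) < \veps_i$, the limit $\iota = \lim_i \iota_i$ exists, is a proper topological embedding, and satisfies $d(\iota_0,\iota) < \veps$. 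Local finiteness and the compact-support condition make $\iota$ eventually stationary near each point, so any local property that holds for all large $i$ on a neighbourhood of a point passes to $\iota$; it therefore suffices to build such a sequence with $\iota_i$ locally flat on a neighbourhood of $\ol{W_1}\cup\cdots\cup\ol{W_i}$.

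\textbf{Reduction to a local lemma.} Assume inductively that $\iota_{i-1}$ is locally flat on an open set $\mathcal O \supseteq \ol{W_1}\cup\cdots\cup\ol{W_{i-1}}$. All modifications now happen inside $V_i$ and, after shrinking by properness, inside one chart $\RR^n$ of $N$, so we are reduced to: \emph{Local lemma. Let $f\colon \RR^m \hookrightarrow \RR^n$ be a topological embedding with $n-m\le 3$ that is locally flat over a closed set $C\subseteq\RR^m$, let $L\subseteq\RR^m$ be compact, and let $\delta>0$; then there is an embedding $g$ with $d(f,g)<\delta$, with $g=f$ near $C$ and off a compact set, and with $g$ locally flat over $L$.} Applying this with $C=\psi_i(\mathcal O\cap V_i)$ and $L=\psi_i(\ol{W_i})$ produces $\iota_i$; the ``rel $C$'' clause is exactly what prevents the modification from destroying the local flatness already gained on $\ol{W_1}\cup\cdots\cup\ol{W_{i-1}}$.

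\textbf{Proof of the local lemma (the core).} This is where $n-m\le 3$ enters. I would run the torus trick: enlarge $L$ to a round ball and glue opposite faces, so the relevant restriction of $f$ becomes an embedding of a neighbourhood of the torus $T^m$ into $T^m\times\RR^q$ with $q=n-m\le 3$; pass to a finite cover of $T^m$ to kill the fundamental-group ambiguity; then straighten the embedding over a handle decomposition of the thickened region, one handle at a time, in a piecewise linear model. Over a $k$-handle this is a relative problem whose obstruction lies in homotopy groups built from the relative $\TOP/\PL$-type data in codimension $q$, and these vanish in the relevant range because $q\le 3$ (concretely, engulfing \`a la Stallings handles the codimension-$3$ case, while codimensions $0,1,2$ are handled by the corresponding lower-codimension flattening techniques). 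Undoing the cover and the torus identification, and using local contractibility of homeomorphism groups (\v{C}ernavski\u{\i}--Edwards--Kirby), one realises the correction by an ambient homeomorphism that is small, supported near $L$, and equal to the identity where $f$ was already flat; this gives $g$. One could instead quote Kirby--Siebenmann handle straightening or the approximation theorem in codimension $\le 3$ directly; I spell out the torus-trick route because it makes the role of the codimension visible.

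\textbf{Main obstacle.} The delicate points are all in the local lemma: propagating the ``rel $C$'' and ``identity off a compact set'' conditions through the torus trick, which a priori only yields a \emph{controlled} homeomorphism, so the thickening and torus identification must be chosen compatibly with $C$; and checking that the handle-straightening (or engulfing) obstructions really do vanish for every $k$ once $q\le 3$. The first-layer bookkeeping --- tuning the $\veps_i$ so that properness and injectivity survive the limit --- is routine but does have to be done.
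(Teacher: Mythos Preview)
The paper does not prove this theorem at all: it is stated in the introduction with the citation \cite[Theorem 5]{Mil72} and then used as a black box in the proof of Corollary~3.2. There is therefore no ``paper's own proof'' to compare your proposal against.

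That said, two remarks on your sketch. First, the hypothesis $n-m\le 3$ in the paper's statement is almost certainly a typo for $n-m\ge 3$: Miller's 1972 paper is about approximating \emph{codimension $\ge 3$} embeddings, and the only application made in the present paper is to $M^n\subseteq\RR^{2n+1}$, where the codimension is $n+1\ge 3$ for $n\ge 2$. Your proposal inherits this inversion, and your line ``codimensions $0,1,2$ are handled by the corresponding lower-codimension flattening techniques'' is therefore addressing cases that are not part of Miller's theorem and for which the situation is genuinely different (the codimension-one and codimension-two approximation theorems are later, harder, and due to other authors, e.g.\ Ancel--Cannon in codimension one). Second, with the inequality corrected to $q=n-m\ge 3$, your outline --- infinite-process reduction to a relative local lemma, then a handle-by-handle straightening in a torus model using engulfing in codimension $\ge 3$ and local contractibility of homeomorphism groups --- is a faithful high-level description of the method Miller actually uses, so as a roadmap it is sound; but since the paper itself only quotes the result, writing this out is going well beyond what the paper does.
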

If a continuous map $f\colon X \to Y$ between locally compact Hausdorff spaces $X$ and $Y$ is proper, then $f$ is also closed (\cite[Chapter I, Proposition 11.5]{Bre97}). Thus, the proper embedding $\iota\colon M \hookrightarrow \mathbb{R}^{2n+1}$ is also closed. \\
For $n = 1$ the existence of a locally flat embedding follows from the classification of $1$-manifolds. If $M$ is a connected $1$-manifold, then $M$ is homeomorphic to $X$, where $X \in \{[0,1],\mathbb{S}^1,[0,\infty),\mathbb{R}\}$, all of which admit a locally flat embedding into $\mathbb{R}^2 \subseteq \mathbb{R}^3$. \vspace*{0.25cm} 

Showing the existence of a topological embedding makes use of topological dimension theory, whose fundamentals we recapitulate in the next section.

\begin{rem}
Throughout this work, we denote by $\mathbb{N} := \{1,2,\ldots\}$ the set of positive integers and by $\mathbb{N}_0 := \{0,1,2,\ldots\}$ the set of non-negative integers. A \emph{manifold} can be with or without boundary. 
\end{rem}

\subsection*{Acknowledgements} 
This paper grew out of a graduate student seminar on topological manifolds at the University of Bonn. I would like to thank the organisers, Mark Powell and Arunima Ray, for suggesting this topic. I thank Mark Powell for encouraging me to turn the notes of my seminar talk into a paper. His comments were indispensable for this task. Furthermore, I would like to thank Stefan Friedl for pointing out some mistakes in my original seminar notes.

\subsection*{Organization of the paper} 
The main part of the paper is divided into three parts. In the first part, we recapitulate the basics of Dimension Theory and show that $\mathrm{dim}~C \leq n$ for every compact subset $C$ of an $n$-manifold $M$. This result is needed to prove the existence of a proper topological embedding $\iota\colon M \hookrightarrow \mathbb{R}^{2n+1}$ in the next part. The proof presented in this part is based on \cite[p. 315, Exercise 6]{Mun00}. The last part is dedicated to proving that $M$ is an ENR and admits a normal microbundle $\mathfrak{n}$.
\section{Dimension Theory}

\begin{define}Let $X$ be a topological space and let $\mathcal{U}$ be an open covering of $X$. A \emph{refinement} of $\mathcal{U}$ is an open cover $\mathcal{V}$ of $X$ such that every $V \in \mathcal{V}$ is contained in some $U \in \mathcal{U}$, i.e. $V \subseteq U$. \end{define}

\begin{define}Let $X$ be a topological space. \begin{enumerate}
  \item A collection $\mathcal{A}$ of subsets of $X$ has \emph{order} $m \in \mathbb{N}_0$ if $m$ is the largest integer such that there are $m + 1$ elements of $\mathcal{A}$ having a non-empty intersection. 
  \item $X$ is called \emph{finite-dimensional} if there exists some $m \in \mathbb{N}_0$ such that every open cover of $X$ possesses a refinement of order at most $m$. \\ The smallest such $m$ is called the \emph{(topological) dimension} of $X$, denoted by $\mathrm{dim}~X$. 
\end{enumerate}
\end{define} 

If $X$ is a topological space and $\mathcal{A}$ is a collection of subsets of $X$, then $\mathcal{A}$ has order $m$ if and only if there exists some $x \in X$ that lies in $m+1$ elements of $\mathcal{A}$ and no point of $X$ lies in more than $m+1$ elements of $\mathcal{A}$. 
\vspace*{0.5cm} 

Let us illuminate the notion of topological dimension with an example.

\begin{exa} 
Let $I := [0,1]$ denote the closed unit interval. We want to show that $\mathrm{dim}~I = 1$. 
Let $\mathcal{U}$ be an open cover of $I$. Since $I$ is a compact metric space, $\mathcal{U}$ has a positive Lebesgue number $\lambda > 0$, i.e. every subset of $I$ having diameter less than $\lambda$ is contained in an element of $\mathcal{U}$. For $k \in \mathbb{N}_0$, let $J_k := \left(\left(k-1\right) \cdot \frac{\lambda}{4}, \left(k+1\right) \cdot \frac{\lambda}{4}\right)$. Since $\mathrm{diam}~J_k = \frac{\lambda}{2} < \lambda$, we can conclude that $\mathcal{V} := \{J_k \cap I \}_{k \in \mathbb{N}_0}$ is a refinement of $\mathcal{U}$. Since $\mathcal{V}$ has order $1$, this shows $\mathrm{dim}~I \leq 1$. \\ In order to show that $\mathrm{dim}~I \geq 1$, we consider the open over $\mathcal{U} := \{[0,1), (0,1]\}$. If $\mathrm{dim}~I = 0$, $\mathcal{U}$ would have a refinement $\mathcal{V}$ of order $0$. Since $\mathcal{V}$ refines $\mathcal{U}$, we get $\mathrm{card}\left(\mathcal{V}\right) \geq 2$ (note that $0 \in V_1$ and $1 \in V_2$ for some $V_1,V_2 \in \mathcal{V}$ and because $\mathcal{V}$ refines $\mathcal{U}$, we get $V_1 \subseteq [0,1)$ and $V_2 \subseteq (0,1]$ and thus $V_1 \neq V_2$). Let $V$ be any element of $\mathcal{V}$ and let $W$ be the union of all $V' \in \mathcal{V} \setminus {V}$. Then both $V$ and $W$ are open and $V \cup W = I$ and $V \cap W = \emptyset$, because $\mathcal{V}$ has order $0$, which is a contradiction since $I$ is connected. \\ Thus, $\mathrm{dim}~I \geq 1$ and therefore $\mathrm{dim}~I = 1$.
\end{exa}

We can use Lebesgue numbers to show a more general result that will be needed throughout this section.

\begin{prop}
Let $n \in \mathbb{N}$. Every compact subspace of $\mathbb{R}^n$ has topological dimension at most~$n$.\end{prop}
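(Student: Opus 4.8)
The strategy is to reduce the statement to the one-dimensional phenomenon exhibited in the Example, namely that a fine enough refinement of a cover of a compact metric space can be built out of "slabs" indexed by lattice points, with controlled overlap order. Concretely, given a compact $C \subseteq \mathbb{R}^n$ and an open cover $\mathcal{U}$ of $C$, I would first replace $\mathcal{U}$ by the restricted cover $\{U \cap C : U \in \mathcal{U}\}$ and invoke the Lebesgue number lemma: since $C$ is a compact metric space, there is $\lambda > 0$ such that every subset of $C$ of diameter $< \lambda$ lies in some element of $\mathcal{U}$. It therefore suffices to produce, for every $\lambda > 0$, an open cover $\mathcal{V}$ of $C$ of order at most $n$ all of whose members have diameter $< \lambda$; such a $\mathcal{V}$ is automatically a refinement of $\mathcal{U}$ and witnesses $\dim C \leq n$.

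To build $\mathcal{V}$, fix a small mesh $\delta > 0$ (to be chosen so that blocks of side a few times $\delta$ have diameter $< \lambda$, e.g. $\delta < \lambda/(4\sqrt{n})$). For a multi-index $k = (k_1,\dots,k_n) \in \mathbb{Z}^n$, set
\begin{align*}
B_k := \Big(\big(k_1 - \tfrac{1}{2}\big)\delta,\ \big(k_1 + \tfrac{1}{2}\big)\delta\Big) \times \cdots \times \Big(\big(k_n - \tfrac{1}{2}\big)\delta,\ \big(k_n + \tfrac{1}{2}\big)\delta\Big),
\end{align*}
so the $B_k$ are disjoint open boxes whose closures tile $\mathbb{R}^n$; they do not cover $\mathbb{R}^n$ (the tiling walls are missed), so I thicken them in a staggered, dimension-by-dimension fashion. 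The cleanest way is the classical construction: take the closed cubes $Q_k = \prod_i [(k_i - \tfrac12)\delta, (k_i + \tfrac12)\delta]$ and, for each subset $S \subseteq \{1,\dots,n\}$ and each lattice point, form an open set obtained by unioning a cube with small open collars in the coordinate directions indexed by $S$, arranged so that exactly $n+1$ "colours" suffice and any point of $\mathbb{R}^n$ lies in at most $n+1$ of the resulting open sets. Equivalently, one can cite the standard fact that $\mathbb{R}^n$ (hence any subspace) has a cover by open sets of diameter $< \lambda$ and order $\leq n$, obtained by the "brick-laying"/nerve-of-the-cubical-complex construction. Intersecting each such open set with $C$ gives $\mathcal{V}$; restricting to $C$ can only lower the order, so $\mathrm{order}(\mathcal{V}) \leq n$, and each member still has diameter $< \lambda$.

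The one genuine point requiring care — and the step I expect to be the main obstacle — is verifying that the thickened cover has order exactly $\leq n$: one must choose the collar widths and the staggering so that no point of $\mathbb{R}^n$ is simultaneously in $n+2$ of the open sets. The bookkeeping is: a point near the interior of an $(n-j)$-dimensional face of the cubical tiling can meet the $2^j$ boxes adjacent to that face, so naive thickening gives order up to $2^n$; the remedy is to shrink the collars so that a point is in the thickening of a neighbouring box only when it lies strictly on the shared wall, and to colour the boxes by $(k_1 + 2k_2 + \cdots + nk_n) \bmod (n+1)$ (or a similar linear form) so that same-colour boxes are never close enough to overlap after thickening — this is exactly the $n=1$ computation of the Example iterated in each coordinate. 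Once the order bound is established for the ambient cover, intersecting with $C$ and combining with the Lebesgue number argument finishes the proof. I would also remark that the same argument shows $\dim K \leq n$ for any compact subset $K$ of an $n$-manifold, since such $K$ is covered by finitely many chart images and one can patch the local estimates — though for the present proposition only the $\mathbb{R}^n$ case is needed.
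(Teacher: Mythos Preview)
Your high-level strategy is exactly the paper's: use the Lebesgue number lemma to reduce the problem to constructing, for every $\lambda>0$, an open cover of $\mathbb{R}^n$ of order at most $n$ whose members have diameter $<\lambda$, then intersect with the compact set. The issue is in the one step you yourself flag as delicate---the construction of that order-$n$ cover.

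The concrete proposal you make, colouring the cubes $Q_k$ by $c(k)=k_1+2k_2+\cdots+nk_n \pmod{n+1}$ and thickening each colour class separately, does not work. Already for $n=2$ the four unit squares meeting at a single lattice corner, say those with $k\in\{(0,0),(1,0),(0,1),(1,1)\}$, are pairwise $\ell^\infty$-adjacent, so they form a $K_4$ in the adjacency graph; hence no $3$-colouring can separate them, and \emph{any} open thickening of two same-coloured adjacent squares will overlap near the shared corner. (Explicitly, $c(0,0)=0$ and $c(1,1)=3\equiv 0$.) In general the $2^n$ cubes meeting at a vertex form a clique, so the chromatic number is at least $2^n$, far larger than $n+1$ once $n\geq 2$. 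So the ``colour the top cells with $n+1$ colours and thicken'' scheme cannot yield order $\leq n$.

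The paper resolves this by abandoning the idea of colouring the $n$-cubes and instead stratifying $\mathbb{R}^n$ by the \emph{open faces of all dimensions} of the unit cubical complex: let $\mathcal{C}_d$ be the set of open $d$-faces (so $\mathcal{C}_0$ is the lattice, $\mathcal{C}_n$ the open unit cubes, etc.). For each face $C\in\mathcal{C}_d$ one chooses a small open neighbourhood $U(C)$ so that $U(C)\cap U(D)=\emptyset$ whenever $C\neq D$ lie in the \emph{same} $\mathcal{C}_d$, and $\operatorname{diam}U(C)\leq 3/2$. Since every point of $\mathbb{R}^n$ lies in exactly one face, the $U(C)$ cover $\mathbb{R}^n$; since the cover splits into $n+1$ pairwise-disjoint subfamilies (one for each dimension $d=0,\dots,n$), its order is at most $n$. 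This is the ``nerve-of-the-cubical-complex'' idea you allude to, and it is the one that actually goes through; the linear-form colouring is a red herring. After that, the paper scales by $\lambda/3$ and intersects with $K$, exactly as you outline.
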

\begin{proof}
Let us first divide $\mathbb{R}^n$ into unit cubes. Let \begin{align*}\mathcal{J} &:= \{(k,k+1)\}_{k \in \mathbb{Z}} \\ \mathcal{K} &:= \{\{k\}\}_{k \in \mathbb{Z}}. \end{align*} If $0 \leq d \leq n$, we define $\mathcal{C}_d$ to be the set of all products \begin{align*}A_1 \times \cdots \times A_n \subseteq \mathbb{R}^n, \end{align*} where precisely $d$ of the sets $A_1, \ldots, A_n$ are an element of $\mathcal{J}$ and the remaining $n - d$ ones are an element of $\mathcal{K}$. \\ Set $\mathcal{C} := \mathcal{C}_0 \cup \cdots \cup \mathcal{C}_n$. Then for every $x \in \mathbb{R}^n$ there exists a unique $C \in \mathcal{C}$ such that $x \in C$.

\begin{cla}
Let $0 \leq d \leq n$. For every $C \in \mathcal{C}_d$, there exists an open neighbourhood $U(C)$ of $C$ satisfying:
\begin{enumerate}
\item $\mathrm{diam}~U(C) \leq \frac{3}{2}$
\item $U(C) \cap U(D) = \emptyset$ whenever $D \in \mathcal{C}_d \setminus \{C\}$.
\end{enumerate} \end{cla}
\begin{proof}[Proof of claim]
Let $x = (x_1,\ldots,x_n) \in C$. We will show that there exists a number $0 < \varepsilon(x) \leq \frac{1}{2}$ such that the open cube centered at $x$ with radius $\varepsilon(x)$, i.e. the set \begin{align*}W_{\varepsilon(x)}(x) = (x_1 - \varepsilon(x), x_1 + \varepsilon(x)) \times \cdots \times (x_n - \varepsilon(x), x_n + \varepsilon(x)), \end{align*} intersects no other element of $\mathcal{C}_d$. If $d = 0$, choose $\varepsilon(x) := \frac{1}{2}$. If $d > 0$, exactly $d$ of the numbers $x_1,\ldots,x_n$ are not integers. Choose $0 < \varepsilon(x) \leq \frac{1}{2}$ such that for each $1 \leq i \leq n$ that satisfies $x_i \notin \mathbb{Z}$, the interval $(x_i - \varepsilon(x), x_i + \varepsilon(x))$ contains no integer. If $y = (y_1,\ldots,y_n) \in W_{\varepsilon(x)}(x)$, we have $y_i \notin \mathbb{Z}$ whenever $x_i \notin \mathbb{Z}$. Thus, either $y \in C$ or $y \in C'$ for some $C' \in \mathcal{C}_{d'}$ where $d' > d$. In conclusion, $W_{\varepsilon(x)}(x)$ intersects no other element of $\mathcal{C}_d$. \\ Now let $U(C)$ be the union of all $W_{\frac{\varepsilon(x)}{2}}$, where $x \in C$. Then obviously $U(C) \cap U(D) = \emptyset$ whenever $D \in \mathcal{C}_d \setminus \{C\}$. This proves (2). \\ If $x,y \in U(C)$, we have $x \in W_{\frac{\varepsilon(x')}{2}}(x')$ and $y \in W_{\frac{\varepsilon(y')}{2}}(y')$ for some $x',y' \in C$. By the triangle inequality \begin{align*}\lVert x - y \rVert_{\infty} \leq \lVert x - x' \rVert_\infty + \lVert x' - y' \rVert_\infty + \lVert y' - y \rVert_\infty \leq \frac{1}{4} + 1 + \frac{1}{4} = \frac{3}{2}, \end{align*} hence establishing (1).
\end{proof}
Now let $\mathcal{A} := \{U(C) \; | \; C \in \mathcal{C} \}$. Then $\mathcal{A}$ is an open cover of $\mathbb{R}^n$ of order $n$ by (2). Let $K \subseteq \mathbb{R}^n$ be compact and let $\mathcal{U}$ be an open cover of $K$. Since $K$ is compact metric, $\mathcal{U}$ has a positive Lebesgue number $\lambda > 0$. \\ Consider the homeomorphism $f\colon \mathbb{R}^n \to \mathbb{R}^n, \; x \mapsto \frac{\lambda}{3} \cdot x$. Since $\mathcal{A}$ is an open cover of order $n$, so is $\mathcal{A}' := \{f(U(C)) \; | \; C \in \mathcal{C} \}$. Since $\mathrm{diam}~f(U(C)) \leq \frac{\lambda}{2} < \lambda$ for all $C \in \mathcal{C}$, we get that $\{f(U(C)) \cap K\}_{C \in \mathcal{C}}$ is an open cover of $K$ that refines $\mathcal{U}$ and has order at most $n$. \\ Thus, $\mathrm{dim}~K \leq n$, as desired.
\end{proof}
We need some more elementary properties of the topological dimension before we can proceed to manifolds.
\begin{lem}Let $X$ be a finite-dimensional topological space and let $Y$ be a closed subspace of $X$. Then $Y$ is also finite-dimensional and $\mathrm{dim}~Y \leq \mathrm{dim}~X$. \end{lem}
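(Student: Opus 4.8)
The plan is to pull an arbitrary open cover of $Y$ back to an open cover of $X$ (this is where closedness of $Y$ is used), refine it inside $X$, and then restrict the refinement to $Y$. Write $n := \dim X$; we must show that every open cover of $Y$ admits a refinement of order at most $n$. So fix an open cover $\mathcal{U}$ of $Y$. Each $U \in \mathcal{U}$ is open in the subspace topology on $Y$, so we may choose an open set $\widetilde{U} \subseteq X$ with $U = \widetilde{U} \cap Y$. Since $Y$ is closed in $X$, the set $X \setminus Y$ is open, and therefore
\begin{align*}
\widetilde{\mathcal{U}} := \{\widetilde{U} \; | \; U \in \mathcal{U}\} \cup \{X \setminus Y\}
\end{align*}
is an open cover of $X$.

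Because $X$ is finite-dimensional, $\widetilde{\mathcal{U}}$ has a refinement $\mathcal{W}$ of order at most $n$. Now push this back to $Y$ by setting $\mathcal{V} := \{W \cap Y \; | \; W \in \mathcal{W}, \; W \cap Y \neq \emptyset\}$. This is clearly an open cover of $Y$. To see that $\mathcal{V}$ refines $\mathcal{U}$: each $W \in \mathcal{W}$ is contained in some element of $\widetilde{\mathcal{U}}$; if that element is $X \setminus Y$ then $W \cap Y = \emptyset$ and $W$ contributes nothing to $\mathcal{V}$, while if $W \subseteq \widetilde{U}$ for some $U \in \mathcal{U}$, then $W \cap Y \subseteq \widetilde{U} \cap Y = U$.

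It remains to bound the order of $\mathcal{V}$. Suppose some $n+2$ pairwise distinct members of $\mathcal{V}$ have nonempty common intersection. Distinct members of $\mathcal{V}$ arise from distinct members of $\mathcal{W}$ (if $W \cap Y = W' \cap Y$ the two sets coincide in $\mathcal{V}$), so we obtain $n+2$ pairwise distinct elements of $\mathcal{W}$ whose intersection contains a point of $Y$, contradicting the fact that $\mathcal{W}$ has order at most $n$. Hence $\mathcal{V}$ has order at most $n$, which shows that $Y$ is finite-dimensional and $\dim Y \leq n = \dim X$. There is no genuine obstacle here: the only point requiring a moment's care is the observation that distinct restricted sets come from distinct sets of $\mathcal{W}$, so that orders are not increased by restriction; the rest is the routine pull-back/push-forward bookkeeping, with closedness of $Y$ entering solely to make $X \setminus Y$ an admissible member of the cover $\widetilde{\mathcal{U}}$.
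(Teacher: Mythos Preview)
Your proof is correct and follows essentially the same approach as the paper: extend the given cover of $Y$ to a cover of $X$ by adjoining $X\setminus Y$, refine in $X$, and restrict back to $Y$. Your write-up is in fact slightly more careful than the paper's in explicitly verifying the refinement property and the order bound of the restricted cover.
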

\begin{proof}
Let $d := \mathrm{dim}~X$. Let $\mathcal{U}$ be an open cover of $Y$. For every $U \in \mathcal{U}$ there exists some open $U' \subseteq X$ such that $U = U' \cap Y$. Let $\mathcal{A} := \{U'\}_{U \in \mathcal{U}} \cup \{X \setminus Y\}$. Then $\mathcal{A}$ is an open cover of $X$ and thus possesses a refinement $\mathcal{B}$ of order at most $d$. Therefore, $\mathcal{V} := \{B \cap Y\}_{B \in \mathcal{B}}$ is an open cover of $Y$ of order at most $d$ that refines $\mathcal{U}$. This proves $\mathrm{dim}~Y \leq d$.
\end{proof}

\begin{prop}
Let $X$ be a topological space and assume $X = X_1 \cup X_2$ for some closed finite-dimensional subspaces $X_1,X_2 \subseteq X$. Then $X$ is also finite-dimensional and \begin{align*}\mathrm{dim}\;X = \max \{\mathrm{dim}\;X_1,\mathrm{dim}\;X_2\}. \end{align*} \end{prop}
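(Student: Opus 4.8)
The plan is to prove the two inequalities separately. For $\mathrm{dim}\;X \geq \max\{\mathrm{dim}\;X_1,\mathrm{dim}\;X_2\}$, note that \emph{once we know $X$ is finite-dimensional}, this is immediate from the preceding Lemma applied to the closed subspaces $X_1,X_2 \subseteq X$. So the real content is that $X$ is finite-dimensional with $\mathrm{dim}\;X \leq n$, where $n := \max\{\mathrm{dim}\;X_1,\mathrm{dim}\;X_2\}$; equivalently, that every open cover $\mathcal{U}$ of $X$ admits an open refinement of order at most $n$. Granting that, the Lemma gives $\mathrm{dim}\;X_i \leq \mathrm{dim}\;X \leq n = \max\{\mathrm{dim}\;X_1,\mathrm{dim}\;X_2\}$, forcing equality.

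To build such a refinement of a given open cover $\mathcal{U}$, I would first refine $\mathcal{U}|_{X_1} = \{U \cap X_1 : U \in \mathcal{U}\}$, an open cover of the finite-dimensional space $X_1$, to an open cover $\mathcal{V}_1$ of $X_1$ of order at most $n$. The technical heart of the argument is then to \emph{extend} $\mathcal{V}_1$ to a family $\mathcal{W}_1$ of open subsets of $X$ that (i) refines $\mathcal{U}$, (ii) covers an open neighbourhood $O_1 \supseteq X_1$, and (iii) still has order at most $n$ as a family of subsets of $X$. Simply thickening each $V \in \mathcal{V}_1$ to an open $\widehat{V} \subseteq X$ with $\widehat{V} \cap X_1 = V$ (possible since $X_1$ is closed) and $\widehat{V}$ contained in a member of $\mathcal{U}$ secures (i) and (ii), but not (iii): away from $X_1$ the thickenings may pile up arbitrarily. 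To arrange (iii) one must shrink the $\widehat{V}$ so that for every collection $S$ of $n+2$ of the sets $V$ — necessarily with $\bigcap_{V \in S} V = \emptyset$, as $\mathcal{V}_1$ has order at most $n$ — the corresponding intersection of thickenings stays empty, and similarly that new overlaps are not created over $X \setminus X_1$. This shrinking is where a separation argument (a Urysohn/shrinking-type step, available since in the applications $X$ is metrizable hence normal) is used, and I expect this order-preserving extension to be the main obstacle.

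With $\mathcal{W}_1$ in hand, set $C_2 := X \setminus O_1$: it is closed in $X$ and, since $O_1 \supseteq X_1$, contained in $X_2$, so the Lemma gives $\mathrm{dim}\;C_2 \leq \mathrm{dim}\;X_2 \leq n$. Running the same two steps for $C_2$ — refining $\mathcal{U}|_{C_2}$ to order at most $n$ and extending to a family $\mathcal{W}_2$ of open subsets of $X$ covering a neighbourhood of $C_2$, refining $\mathcal{U}$, of order at most $n$, and moreover (using $C_2 \cap X_1 = \varnothing$ and $X_1$ closed, after a further shrinking of the neighbourhood $O_1$) with each member of $\mathcal{W}_2$ disjoint from $X_1$ and confined to a neighbourhood of $C_2$ separated from a smaller neighbourhood of $X_1$ — one obtains the second family. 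Then $\mathcal{W} := \mathcal{W}_1 \cup \mathcal{W}_2$ refines $\mathcal{U}$ and covers $X$, since $\mathcal{W}_1$ covers $O_1 \supseteq X_1$ and $\mathcal{W}_2$ covers a neighbourhood of $X \setminus O_1$. The order of $\mathcal{W}$ is then checked by a case analysis on the location of a point $x$: in the region where only members of $\mathcal{W}_1$ can contain $x$, and in the region where only members of $\mathcal{W}_2$ can, the bound is $n$ by construction; the delicate case is the overlap near the common boundary of the two neighbourhoods, and it is precisely the shrinkings of the previous paragraph that prevent the two contributions from adding up there. This yields $\mathrm{dim}\;X \leq n$, and together with the first paragraph, $\mathrm{dim}\;X = \max\{\mathrm{dim}\;X_1,\mathrm{dim}\;X_2\}$.
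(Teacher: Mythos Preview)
Your approach differs from the paper's and has a genuine gap. The proposition is stated for an \emph{arbitrary} topological space $X$, with no separation hypotheses, yet your order-preserving extension step explicitly appeals to a Urysohn/shrinking argument ``available since in the applications $X$ is metrizable hence normal''. That is not available here, so as written your argument does not prove the proposition. Even granting normality, the two hard steps you flag are real obstacles you have not resolved: (i) extending a cover $\mathcal{V}_1$ of $X_1$ of order $\le n$ to a family $\mathcal{W}_1$ of open sets in $X$ still of order $\le n$ generally requires local finiteness of $\mathcal{V}_1$ (swelling lemmas of this type need it), which you have not arranged; and (ii) in the overlap region between the neighbourhoods of $X_1$ and $C_2$, a point can lie in members of both $\mathcal{W}_1$ and $\mathcal{W}_2$, and nothing in your construction prevents the multiplicities from adding. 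Saying ``it is precisely the shrinkings of the previous paragraph that prevent the two contributions from adding up'' is not an argument.

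The paper sidesteps all of this with a different and much cleaner idea that needs no separation axioms. Rather than extending covers of the $X_i$ to $X$, it works throughout with open covers of $X$ and introduces the notion of a cover having \emph{order at most $d$ in a subspace $Y$}. A short claim shows: for any closed $Y\subseteq X$ with $\dim Y\le d$, every open cover of $X$ has a refinement of order $\le d$ in $Y$ (just thicken a refinement of the restricted cover and throw in the sets $U\setminus Y$). One then takes a refinement $\mathcal{A}_1$ of $\mathcal{U}$ with order $\le d$ in $X_1$, a refinement $\mathcal{A}_2$ of $\mathcal{A}_1$ with order $\le d$ in $X_2$, and \emph{amalgamates}: choose $f\colon\mathcal{A}_2\to\mathcal{A}_1$ with $U\subseteq f(U)$ and set $V(S)=\bigcup\{U\in\mathcal{A}_2: f(U)=S\}$. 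The cover $\{V(S)\}_{S\in\mathcal{A}_1}$ refines $\mathcal{U}$, and a two-line case split ($x\in X_1$: use $V(S)\subseteq S$ and order of $\mathcal{A}_1$ in $X_1$; $x\in X_2$: pull back to distinct $U_i\in\mathcal{A}_2$ and use order of $\mathcal{A}_2$ in $X_2$) gives order $\le d$ everywhere. No shrinking, no normality, no overlap analysis.
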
 
Let us fix a notion for the proof of this theorem. If $\mathcal{U}$ is an open cover of $X$ and $Y \subseteq X$ is a subspace of $X$, we say that $\mathcal{U}$ has \emph{order} $m \in \mathbb{N}_0$ \emph{in} $Y$ if there exists some point $y \in Y$ that is contained in $m+1$ distinct elements of $\mathcal{U}$ and no point of $Y$ is contained in more than $m+1$ distinct elements of $\mathcal{U}$.
\begin{proof}
By Lemma 2.5, it suffices to prove $\mathrm{dim}~X \leq \max\{\mathrm{dim}~X_1,\mathrm{dim}~X_2\}$. 
\begin{cla}
Let $\mathcal{U}$ be an open cover of $X$ and let $Y$ be a closed subspace of $X$ such that $\mathrm{dim}~Y \leq d < \infty$. Then $\mathcal{U}$ possesses a refinement that has order at most $d$ in $Y$. 
\end{cla}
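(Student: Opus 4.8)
The plan is to take a refinement of low order that already exists \emph{on} $Y$ --- it does, because $\mathrm{dim}~Y \leq d$ --- and to transplant it to all of $X$ by ``thickening'' its members to open subsets of $X$, while filling in the part of $X$ outside $Y$ with pieces of $\mathcal{U}$ that miss $Y$ altogether, so that these filler pieces contribute nothing to the order at any point of $Y$.

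Concretely, I would first restrict $\mathcal{U}$ to $Y$, obtaining the open cover $\mathcal{U}|_Y := \{\, U \cap Y : U \in \mathcal{U} \,\}$ of the space $Y$. Since $\mathrm{dim}~Y \leq d < \infty$, this cover admits a refinement $\mathcal{W}$ of order at most $d$. For each $W \in \mathcal{W}$, pick $U(W) \in \mathcal{U}$ with $W \subseteq U(W) \cap Y$ (possible because $\mathcal{W}$ refines $\mathcal{U}|_Y$), and --- using that $W$ is open in the subspace $Y$ --- pick an open set $O'(W) \subseteq X$ with $O'(W) \cap Y = W$; then set $O(W) := O'(W) \cap U(W)$. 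This is open in $X$, satisfies $O(W) \cap Y = W$ (since $W \subseteq U(W)$), and is contained in $U(W) \in \mathcal{U}$. Finally, define
\begin{align*}
\mathcal{V} := \{\, O(W) : W \in \mathcal{W} \,\} \cup \{\, U \cap (X \setminus Y) : U \in \mathcal{U} \,\},
\end{align*}
a family of open subsets of $X$, where $X \setminus Y$ is open because $Y$ is closed.

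It then remains to check three things. First, $\mathcal{V}$ covers $X$: a point of $Y$ lies in some $W$, hence in $O(W)$, while a point of $X \setminus Y$ lies in some $U \cap (X \setminus Y)$. Second, $\mathcal{V}$ refines $\mathcal{U}$: $O(W) \subseteq U(W)$ and $U \cap (X \setminus Y) \subseteq U$. Third --- and this is the only step requiring any care --- $\mathcal{V}$ has order at most $d$ in $Y$: a point $y \in Y$ lies in no set of the form $U \cap (X \setminus Y)$, and lies in $O(W)$ if and only if $y \in O(W) \cap Y = W$; since the assignment $W \mapsto O(W)$ is injective ($O(W)$ recovers $W$ as $O(W) \cap Y$), the elements of $\mathcal{V}$ containing $y$ are in bijection with the $W \in \mathcal{W}$ containing $y$, of which there are at most $d+1$ because $\mathcal{W}$ has order at most $d$.

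I do not anticipate a genuine obstacle here: the whole content is the idea of splitting $\mathcal{V}$ into thickenings of a good cover of $Y$ together with $\mathcal{U}$-pieces supported in the open complement $X \setminus Y$, plus the observation that these thickenings meet $Y$ in exactly the order-controlled sets of $\mathcal{W}$. It is perhaps worth remarking that no separation hypothesis on $X$ is used: the sets $O'(W)$ exist by the very definition of the subspace topology, and intersecting with $U(W)$ is precisely what guarantees that $\mathcal{V}$ refines $\mathcal{U}$.
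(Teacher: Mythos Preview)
Your proof is correct and follows essentially the same route as the paper's: restrict $\mathcal{U}$ to $Y$, refine there to order at most $d$, thicken each member back to an open set of $X$ contained in some $U \in \mathcal{U}$, and fill the complement with the sets $U \setminus Y$. Your verification is in fact slightly more careful than the paper's, since you make explicit the injectivity of $W \mapsto O(W)$ needed for the order count at points of $Y$.
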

\begin{proof}[Proof of claim]
Let $\mathcal{A} := \{U \cap Y \}_{U \in \mathcal{U}}$. Since $\mathcal{A}$ is an open cover of $Y$ and $\mathrm{dim}~Y \leq d$, there exists a refinement $\mathcal{B}$ of $\mathcal{A}$ of order at most $d$. For every $B \in \mathcal{B}$, there exists some open set $U_B \subseteq X$ such that $B = U_B \cap Y$. Furthermore, there exists some $A_B \in \mathcal{U}$ such that $B \subseteq A_B \cap Y$. Then, $\{U_B \cap A_B\}_{B \in \mathcal{B}} \cup \{U \setminus Y\}_{U \in \mathcal{U}}$ is an open cover of $X$ that refines $\mathcal{U}$ and has order at most $d$ in $Y$.
\end{proof}
Now, let $d := \max \{\mathrm{dim}~X_1,\mathrm{dim}~X_2\}$ and let $\mathcal{U}$ be an open cover of $X$. We need to show that $\mathcal{U}$ has a refinement $\mathcal{V}$ of order at most $d$. \\
Let $\mathcal{A}_1$ be a refinement of $\mathcal{U}$ of order at most $d$ in $X_1$ and let $\mathcal{A}_2$ be a refinement of $\mathcal{A}_1$ of order at most $d$ in $X_2$. We can define a map $f\colon \mathcal{A}_2 \to \mathcal{A}_1$ as follows. For every $U \in \mathcal{A}_2$ choose an element $f(U) \in \mathcal{A}_1$ such that $U \subseteq f(U)$. \\
For all $S \in \mathcal{A}_1$, let $V(S)$ be the union of all $U \in \mathcal{A}_2$ that satisfy $f(U) = S$ and finally let $\mathcal{V} := \{V(S) \}_{S \in \mathcal{A}_1}$. Then, $\mathcal{V}$ is an open cover of $X$: For if $x \in X$, then $x \in U$ for some $U \in \mathcal{A}_2$ and because $U \subseteq V(f(U))$, we can deduce $x \in V(f(U))$. Furthermore, $\mathcal{V}$ refines $\mathcal{A}_1$, because $V(S) \subseteq S$ for every $S \in \mathcal{A}_1$. Since $\mathcal{A}_1$ refines $\mathcal{U}$, the cover $\mathcal{V}$ must refine $\mathcal{U}$. \\ Finally, we need to show that $\mathcal{V}$ has order at most $d$.  Suppose $x \in V(S_1) \cap \cdots \cap V(S_k)$, where the sets $V(S_1),\ldots,V(S_k)$ are distinct. Thus, the sets $S_1,\ldots,S_k$ are distinct. For all $1 \leq i \leq k$, we can find a set $U_i \in \mathcal{A}_2$ such that $x \in U_i$ and $f(U_i) = S_i$, because $x \in V(S_i)$. Because $S_1,\ldots,S_k$ are distinct, so are $U_1,\ldots,U_k$. Thus, we have the following situation: 
\begin{align*}x \in U_1 \cap \cdots \cap U_k \subseteq V(S_1) \cap \cdots \cap V(S_k) \subseteq S_1 \cap \cdots \cap S_k \end{align*}
Because $X = X_1 \cup X_2$, we have $x \in X_1$ or $x \in X_2$. If $x \in X_1$, then $k \leq d+1$, because $\mathcal{A}_1$ has order at most $d$ in $X_1$. If $x \in X_2$, we can also conclude $k \leq d+1$, because $\mathcal{A}_2$ has order at most $d$ in $X_2$. \\ Thus, $k \leq d+1$, proving that $\mathcal{V}$ has order at most $d$, as desired.
\end{proof}
A simple induction argument then yields the following corollary.
\begin{coro}
Let $X$ be a topological space and let $X_1,\ldots,X_r \subseteq X$ be closed finite-dimensional subspaces of $X$ such that 
\begin{align*}X = \bigcup_{i = 1}^r X_i. \end{align*}
Then $X$ is also finite-dimensional and 
\begin{align*}\mathrm{dim}~X = \max\{\mathrm{dim}~X_1,\ldots,\mathrm{dim}~X_n\}. \end{align*} 
\end{coro}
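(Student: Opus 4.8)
The plan is to prove the corollary by a straightforward induction on $r$, feeding Proposition~2.6 into itself at each stage. The base case $r = 1$ is vacuous, and the case $r = 2$ is precisely Proposition~2.6. So fix $r \geq 3$, assume the statement for any space that is a union of $r-1$ closed finite-dimensional subspaces, and set $Y := X_1 \cup \cdots \cup X_{r-1}$.

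Since a finite union of closed subsets is closed, $Y$ is a closed subspace of $X$, and each $X_i$ with $1 \leq i \leq r-1$ is closed in $Y$ (it is closed in $X$ and contained in $Y$). Because the subspace topology is transitive, the topological dimension of each $X_i$ computed inside $Y$ coincides with its dimension computed inside $X$; in particular each $X_i$ is finite-dimensional as a subspace of $Y$. The inductive hypothesis therefore applies to the cover $Y = X_1 \cup \cdots \cup X_{r-1}$ and yields that $Y$ is finite-dimensional with $\dim Y = \max\{\dim X_1, \ldots, \dim X_{r-1}\}$. Now $X = Y \cup X_r$ exhibits $X$ as a union of the two closed finite-dimensional subspaces $Y$ and $X_r$, so Proposition~2.6 shows that $X$ is finite-dimensional and
\begin{align*}
\dim X = \max\{\dim Y, \dim X_r\} = \max\{\dim X_1, \ldots, \dim X_{r-1}, \dim X_r\},
\end{align*}
which completes the induction.

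I do not expect any real obstacle here. The only points deserving a moment's care are that a finite union of closed subsets is again closed (so that Proposition~2.6 is genuinely applicable at every stage of the induction) and that both the property of being finite-dimensional and the value of $\dim$ depend only on the subspace and not on the ambient space in which it sits — a fact already built into the way Lemma~2.5 and Proposition~2.6 are formulated. (One should also read the maximum in the displayed conclusion of the statement as ranging over $\dim X_1, \ldots, \dim X_r$.)
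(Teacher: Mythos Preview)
Your induction argument is correct and is precisely the ``simple induction argument'' the paper invokes to deduce this corollary from Proposition~2.6. Your side remarks about closedness of finite unions and the intrinsic nature of $\dim$ are the right bookkeeping, and your reading of the index in the displayed maximum (it should range up to $X_r$) is also correct.
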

We can now apply these results to manifolds.
\begin{coro}
Let $M$ be a topological $n$-manifold. If $C \subseteq M$ is compact, then $\mathrm{dim}~C \leq n$.
\end{coro}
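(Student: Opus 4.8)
The plan is to exploit compactness to reduce everything to a statement about $\mathbb{R}^n$: cover $C$ by finitely many chart domains, cut $C$ into finitely many pieces each of which is closed in $C$ and contained in a single chart, bound the dimension of each piece by Proposition 2.4, and glue the estimates together with Corollary 2.7.

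In detail, since $M$ is locally Euclidean every point $p \in C$ lies in the domain $W$ of a chart $\varphi\colon W \to \varphi(W)$, a homeomorphism onto an open subset of $\mathbb{R}^n$ (or of a closed half-space of $\mathbb{R}^n$, if $p$ is a boundary point). As $M$ is locally compact and Hausdorff, $p$ has an open neighbourhood $V_p$ whose closure $\overline{V_p}$ in $M$ is compact and contained in such a chart domain $W_p$ — concretely one takes a small ball around $\varphi(p)$ whose closure still lies in $\varphi(W)$ and pulls it back, using the Hausdorff property to see that its preimage is closed in $M$. By compactness of $C$ finitely many $V_{p_1}, \dots, V_{p_m}$ already cover $C$. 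Letting $W_1, \dots, W_r$ be the distinct chart domains among $W_{p_1}, \dots, W_{p_m}$, put
\[ K_i \ :=\ \bigcup_{j \,:\, W_{p_j} = W_i} \bigl(\overline{V_{p_j}} \cap C\bigr) \qquad (1 \leq i \leq r). \]
Each $K_i$ is a finite union of closed subsets of $C$, hence closed in $C$, and being closed in the compact space $C$ it is itself compact; moreover $K_i \subseteq W_i$ and $C = K_1 \cup \dots \cup K_r$.

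Now fix $i$. The chart homeomorphism identifies $W_i$ with a subspace of $\mathbb{R}^n$ (an open set, or the intersection of an open set with a half-space — in either case a subspace of $\mathbb{R}^n$), and under it $K_i$ corresponds to a compact subspace of $\mathbb{R}^n$. By Proposition 2.4 that subspace has topological dimension at most $n$, and since the definition of topological dimension (Definition 2.2) is phrased purely in terms of open covers and refinements it is invariant under homeomorphism, so $\dim K_i \leq n$. Finally $C = \bigcup_{i=1}^r K_i$ exhibits $C$ as a finite union of closed, finite-dimensional subspaces, so Corollary 2.7 yields that $C$ is finite-dimensional with $\dim C = \max_{1 \leq i \leq r} \dim K_i \leq n$. (If $C = \emptyset$, or if $n = 0$ in which case $C$ is a finite discrete space, there is nothing to prove.)

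I expect the only step that is not purely formal to be the shrinking that produces the sets $K_i$: one needs them to be simultaneously contained in a single chart domain and closed in $C$, which is exactly where the Hausdorff and local-compactness properties of $M$ enter. After that the argument is a direct appeal to Proposition 2.4 (via the topological invariance of $\dim$, and noting that boundary charts cause no difficulty because half-spaces sit inside $\mathbb{R}^n$) followed by Corollary 2.7.
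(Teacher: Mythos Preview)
Your proof is correct and follows essentially the same strategy as the paper: cover $C$ by finitely many compact pieces each sitting inside a single chart, bound the dimension of each piece by Proposition~2.4, and combine via Corollary~2.7. The paper's version is a touch shorter because it covers $C$ by compact $n$-balls $B_1,\dots,B_k$ and then invokes Lemma~2.5 to get $\dim(B_j\cap C)\le\dim B_j\le n$, which spares the explicit shrinking you carry out to obtain the sets $K_i$; but this is a cosmetic difference, not a substantive one.
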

\begin{proof}
Since $M$ is locally Euclidean, $C$ can be covered by finitely many compact $n$-balls $B_1,\ldots,B_k \subseteq M$. By Theorem 2.4 and Lemma 2.5 \begin{align*}\mathrm{dim}~(B_j \cap C) \leq \mathrm{dim}~B_j \leq n \end{align*} (note that $B_j$ is homeomorphic to a compact subset of $\mathbb{R}^n$) for all $1 \leq j \leq k$. \\
Since $C = \bigcup_{j = 1}^k (B_j \cap C)$, Corollary 2.7 yields $\mathrm{dim}~C \leq n$.
\end{proof}

As a special case, we can note that every compact $n$-manifold is finite-dimensional and its topological dimension is at most $n$. In fact, this result can be extended to general $n$-manifolds. For this, we need a technical lemma.
\begin{lem}
Let $X$ be a topological space and assume $X = \bigcup_{i=0}^\infty C_i$, where every $C_i$ is closed, $C_0 = \emptyset$, $C_i \subseteq \topint{C_{i+1}}$ and there exists some $d \in \mathbb{N}_0$ such that $\mathrm{dim}~\overline{C_{i+1} \setminus C_i} \leq d$ for all $i \in \mathbb{N}_0$. \\ Then $X$ is finite-dimensional and $\mathrm{dim}~X \leq d$.
\end{lem}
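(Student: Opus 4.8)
The plan is to decompose $X$ as a union of two closed finite-dimensional subspaces and invoke Proposition 2.6. For $i \geq 0$ set $K_i := \overline{C_{i+1} \setminus C_i}$, so that $\mathrm{dim}~K_i \leq d$ by hypothesis. Since $C_0 = \emptyset$ and the $C_i$ exhaust $X$, every $x \in X$ lies in $C_j \setminus C_{j-1}$ for $j := \min\{i \geq 1 : x \in C_i\}$, and hence in $K_{j-1}$; so $\{K_i\}_{i \geq 0}$ is a closed cover of $X$.

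First I would extract two consequences of the hypothesis $C_i \subseteq \topint{C_{i+1}}$. For local finiteness of $\{K_i\}_{i\geq 0}$: if $x \in C_j \setminus C_{j-1}$ then $W := \topint{C_{j+1}}$ is an open neighbourhood of $x$, and for each $i \geq j+1$ the set $C_{i+1} \setminus C_i$ is disjoint from $C_i \supseteq C_{j+1} \supseteq W$, so $K_i \cap W = \emptyset$ because $W$ is open; thus $W$ meets only $K_0, \dots, K_j$. For disjointness of far-apart shells: if $i' \geq i+2$ then $K_i \subseteq C_{i+1} \subseteq \topint{C_{i+2}} \subseteq C_{i'}$, while $C_{i'+1} \setminus C_{i'}$ is disjoint from the open set $\topint{C_{i+2}}$, so $K_{i'} \cap K_i = \emptyset$. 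In particular the even-indexed shells are pairwise disjoint, as are the odd-indexed ones.

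Now set $Y_{\mathrm{ev}} := \bigcup_{k \geq 0} K_{2k}$ and $Y_{\mathrm{od}} := \bigcup_{k \geq 0} K_{2k+1}$, so $X = Y_{\mathrm{ev}} \cup Y_{\mathrm{od}}$; by local finiteness both are closed in $X$. Since the $K_{2k}$ are pairwise disjoint and locally finite, each is clopen in $Y_{\mathrm{ev}}$, so $Y_{\mathrm{ev}}$ is the topological sum of the $K_{2k}$. To see $\mathrm{dim}~Y_{\mathrm{ev}} \leq d$: given an open cover $\mathcal{W}$ of $Y_{\mathrm{ev}}$, refine each restriction $\{W \cap K_{2k}\}_{W \in \mathcal{W}}$ to an open cover $\mathcal{B}_k$ of $K_{2k}$ of order at most $d$ (possible since $\mathrm{dim}~K_{2k} \leq d$); as $K_{2k}$ is open in $Y_{\mathrm{ev}}$, the family $\bigcup_k \mathcal{B}_k$ consists of sets open in $Y_{\mathrm{ev}}$, refines $\mathcal{W}$, and has order at most $d$ because each point of $Y_{\mathrm{ev}}$ lies in a single $K_{2k}$. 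The same argument gives $\mathrm{dim}~Y_{\mathrm{od}} \leq d$, and Proposition 2.6 then yields $\mathrm{dim}~X = \max\{\mathrm{dim}~Y_{\mathrm{ev}}, \mathrm{dim}~Y_{\mathrm{od}}\} \leq d$.

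The delicate part will be the two structural observations about the filtration — local finiteness and the disjointness of $K_i, K_{i'}$ for $|i-i'| \geq 2$ — which is exactly where the condition $C_i \subseteq \topint{C_{i+1}}$ gets used; the countable disjoint-union step also needs a little attention because only the finite sum theorem (Corollary 2.7) is at hand, but once the pieces are known to be clopen it reduces to the one-line argument above.
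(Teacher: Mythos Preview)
Your argument is correct and takes a genuinely different route from the paper. The paper proves the lemma by a direct inductive construction: starting from an open cover $\mathcal{U}$, it builds refinements $\mathcal{V}_0 = \mathcal{U}, \mathcal{V}_1, \mathcal{V}_2, \ldots$ with $\mathcal{V}_i$ of order at most $d$ in $C_i$, arranged so that any set meeting $C_{i-1}$ persists into all later $\mathcal{V}_j$; the stabilised collection is then the desired refinement. You instead reduce the countable situation to the two-set sum theorem (Proposition~2.6) by observing that shells $K_i$, $K_{i'}$ with $|i-i'|\geq 2$ are disjoint, so the even-indexed shells (and likewise the odd-indexed ones) form a locally finite disjoint family of clopen pieces in their union, whose dimension is then bounded by a one-line refinement argument. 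Your approach is shorter and more conceptual --- it exposes the countable sum theorem as ``finite sum theorem plus trivial disjoint-union case'' --- whereas the paper's hands-on construction is self-contained but carries more bookkeeping. Both use the hypothesis $C_i \subseteq \topint{C_{i+1}}$ in the same essential way, to separate non-adjacent layers.
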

\begin{proof}
We will construct a sequence of covers $(\mathcal{V}_i)_{i \in \mathbb{N}_0}$ of $X$ such that 
\begin{enumerate} \item $\mathcal{V}_{i+1}$ refines $\mathcal{V}_{i}$, \item $\mathcal{V}_i$ has order at most $d$ in $C_i$, \item if $V \in \mathcal{V}_i$ such that $V \cap C_{i-1} \neq \emptyset$, then $V \in \mathcal{V}_{i+1}$ and \item $\mathcal{V}_0 := \mathcal{U}$. \end{enumerate} Under these hypotheses, 
\begin{align*}\mathcal{V} := \{V \subseteq X \; | \; \exists i \in \mathbb{N}\colon V \in \mathcal{V}_i \; \mathrm{and} \; V \cap C_{i-1} \neq \emptyset \} \end{align*} is a refinement of $\mathcal{U}$ of order at most $d$: Let $x \in X$. Then $x \in C_{i-1}$ for some $i \in \mathbb{N}$. Since $\mathcal{V}_i$ is an open cover of $X$, we get $x \in V$ for some $V \in \mathcal{V}_i$. But this means $V \cap C_{i-1} \neq \emptyset$ and hence $V \in \mathcal{V}$, proving that $\mathcal{V}$ is an open cover of $X$. Suppose now that $U_1,\ldots,U_k$ are distinct elements of $\mathcal{V}$ having nonempty intersection and let $x$ be an element of their intersection. Then, there exists some $i_0 \in \mathbb{N}$ such that $x \in C_{i_0 - 1}$. For each $1 \leq j \leq k$, there exists some $i_j \in \mathbb{N}$ such that $U_j \in \mathcal{V}_{i_j}$ and $U_j \cap C_{i_j - 1} \neq \emptyset$. Letting $i := \max\{i_0,i_1,\ldots,i_k\}$, we get $U_1,\ldots,U_k \in \mathcal{V}_i$ by (3) and 
\begin{align*}x \in \bigcap_{j=1}^k U_j \cap C_{i}. \end{align*}
Since $\mathcal{V}_i$ has order at most $d$ in $C_{i}$ by (2), we get $k \leq d+1$, i.e. $\mathcal{V}$ has order at most $d$,  \vspace*{0.25cm}  as desired.

All that is left now is constructing the sequence $(\mathcal{V_i})_{i \in \mathbb{N}}$. Set $\mathcal{V}_0 = \mathcal{U}$ and suppose $\mathcal{V}_1,\ldots,\mathcal{V}_i$ have already been constructed. Just as in the proof of Theorem 2.6 we can find a refinement $\mathcal{W}$ of $\mathcal{V}_n$ that has order at most $d$ in $\overline{C_{i+1} \setminus C_i}$. Define a map $f\colon \mathcal{W} \to \mathcal{V}_i$ by choosing $f(W)$ such that $W \subseteq f(W)$ for all $W \in \mathcal{W}$. For $U \in \mathcal{V}_i$, we define $V(U)$ to be the union of all $W \in \mathcal{W}$ such that $f(W) = U$. We define $\mathcal{V}_{i+1}$ to consist of three types of set: $\mathcal{V}_{i+1}$ contains all $U \in \mathcal{V}_i$ such that $U \cap C_{i-1} \neq \emptyset$. Furthermore, $\mathcal{V}_{i+1}$ contains all $V(U)$ where $U \in \mathcal{V}_i$ such that $U \cap C_{i-1} = \emptyset$ and $U \cap C_i \neq \emptyset$. Finally, $\mathcal{V}_{i+1}$ contains all $W \in \mathcal{W}$ such that $W \cap C_i \neq \emptyset$. 
\begin{cla}
$\mathcal{V}_{i+1}$ is a refinement of $\mathcal{V}_i$ that has order at most $d$ in $C_{i+1}$. 
\end{cla}
\begin{proof}
Let $x \in X$. We need to show the existence of some $U \in \mathcal{V}_{i+1}$ satisfying $x \in U$. \\ Suppose $x \in \mathcal{C}_{i-1}$. Since $\mathcal{V}_i$ is an open cover of $X$, we have $x \in U$ for some $U \in \mathcal{V}_i$. Because of $U \cap C_{i-1} \neq \emptyset$, we can conclude $U \in \mathcal{V}_{i+1}$. If $x \notin C_{n-1}$, we can find $W \in \mathcal{W}$ satisfying $x \in W$. If $W \cap C_i = \emptyset$, then $W \in \mathcal{V}_{i+1}$. Otherwise, $f(W) \subseteq W$. If $f(W) \cap C_{i-1} \neq \emptyset$, then $x \in f(W) \in \mathcal{V}_{n+1}$. If $f(W) \cap C_{i-1} = \emptyset$, then $x \in V(f(W)))$ and $V(f(W)) \in \mathcal{V}_{i+1}$, because $f(W) \cap C_{i - 1} = \emptyset$ and $\emptyset \neq W \cap C_i \subseteq f(W) \cap C_i$. \\
In conclusion, $\mathcal{V}_{i+1}$ is an open cover of $X$. It is obvious that $\mathcal{V}_{i+1}$ refines $\mathcal{V}_i$. \\ Now let $U_1,\ldots,U_k \in \mathcal{V}_{i+1}$ be $k$ distinct subsets of $\mathcal{V}_{i+1}$ and suppose $x \in C_{i+1}$ such that $x \in \bigcap_{j=1}^k U_j$. If $x \in C_{i-1}$, then necessarily $U_1,\ldots,U_k \in \mathcal{V}_i$ by the definition of $\mathcal{V}_{i+1}$ and thus $k \leq d+1$, because $\mathcal{V}_i$ has order at most $d$ in $C_i$. \\ If $x \in C_i \setminus C_{i-1}$, then $U_1 = V(S_1), \ldots, U_k = V(S_k)$ for some distinct $S_1,\ldots,S_k \in \mathcal{V}_i$ satisfying $S_j \cap C_{i-1} = \emptyset$ and $S \cap C_i \neq \emptyset$ ($1 \leq j \leq k$). Thus, 
\begin{align*}x \in \bigcap_{j =  1}^k V(S_j) \subseteq \bigcap_{j = 1}^k S_j, \end{align*}
implying that $k \leq d+1$, because $\mathcal{V}_i$ has order at most $d$ in $C_i$. \\
Finally if $x \in C_{i+1} \setminus C_i$, then $U_1,\ldots,U_k \in \mathcal{W}$, hence  $k \leq d+1$, because $\mathcal{W}$  has order at most $d$ in $\overline{C_{i+1} \setminus C_i}$. In conclusion, $\mathcal{V}_{i+1}$ has order at most $d$ in $C_{i+1}$.
\end{proof}
This completes the proof of Lemma 2.9.
\end{proof}
If $X$ is a second-countable locally compact Hausdorff space, then we can decompose $X$ as in the statement of Lemma 2.9.
\begin{lem}
Every second-countable locally compact Hausdorff space $X$ can be \emph{exhausted by compact subsets}, i.e. there exist compact subsets $(C_i)_{i \in \mathbb{N}}$ such that $C_i \subseteq \topint{C_{i+1}}$ and $X = \bigcup_{i=1}^\infty C_i$. 
\end{lem}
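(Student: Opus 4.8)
The plan is to first extract, from second-countability together with local compactness, a \emph{countable} open cover of $X$ all of whose members have compact closure, and then to build the exhaustion $(C_i)_{i\in\mathbb{N}}$ by an inductive ``swallowing'' argument: at stage $i$ one covers the already-constructed compact set $C_i$ by finitely many members of this cover and throws in enough further members so that $C_i$ lands in the interior of $C_{i+1}$, while arranging that every member of the cover is eventually absorbed.

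In detail, first I would fix a countable basis $\{B_k\}_{k\in\mathbb{N}}$ of $X$. Using local compactness and the Hausdorff hypothesis, each $x\in X$ has an open neighbourhood $U$ with $\overline{U}$ compact; choosing a basis element with $x\in B_k\subseteq U$ gives that $\overline{B_k}\subseteq\overline{U}$ is compact, since a closed subset of a compact space is compact. Hence the subfamily $\mathcal{W}:=\{W_1,W_2,\ldots\}$ of those $B_k$ with $\overline{B_k}$ compact is still an open cover of $X$, and it is countable.

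Then I would construct $(C_i)_{i\in\mathbb{N}}$ recursively. Set $C_1:=\overline{W_1}$. Given the compact set $C_i$, compactness yields a finite subfamily of $\mathcal{W}$ covering it; let $N_i\in\mathbb{N}$ be large enough that $C_i\subseteq W_1\cup\cdots\cup W_{N_i}$ and also $N_i\geq i+1$, and put $C_{i+1}:=\overline{W_1}\cup\cdots\cup\overline{W_{N_i}}$. Each $C_{i+1}$ is a finite union of compact sets, hence compact; moreover $C_i\subseteq W_1\cup\cdots\cup W_{N_i}\subseteq\topint{C_{i+1}}$, because the left-hand union is open and contained in $C_{i+1}$. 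Finally, since $N_i\geq i+1$ we have $W_k\subseteq\overline{W_k}\subseteq C_k$ for every $k$, so $X=\bigcup_k W_k\subseteq\bigcup_k C_k\subseteq X$, which gives the desired exhaustion.

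The argument is entirely routine; the only point requiring genuine care is the very first step, producing a countable cover by relatively compact open sets, since that is where both hypotheses are really used (second-countability to stay countable, local compactness plus the Hausdorff property to pass to sets with compact closure). After that the recursion is pure bookkeeping, the mild subtlety being to enforce \emph{simultaneously} the nesting $C_i\subseteq\topint{C_{i+1}}$ and the exhaustion $X=\bigcup_i C_i$, which is exactly why one insists that $N_i\geq i+1$.
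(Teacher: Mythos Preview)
Your proof is correct and follows essentially the same approach as the paper: both arguments extract from the countable basis the subfamily of members with compact closure, then recursively define $C_{i+1}$ as the union of closures $\overline{W_1}\cup\cdots\cup\overline{W_{N_i}}$ with $N_i\geq i+1$ chosen so that the $W_j$'s cover $C_i$. The paper's write-up and yours differ only in notation.
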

\begin{proof}
Let $\mathcal{B}$ be a countable basis of the topology of $X$ and let 
\begin{align*}\mathcal{B}' := \{V \in \mathcal{B} \; | \; \overline{V} \; \mathrm{is \; compact} \}. \end{align*}
Since $X$ is locally compact, $\mathcal{B}'$ is again a basis of $X$. Let us now write $\mathcal{B}' = \{V_i\}_{i \in \mathbb{N}}$. Let $C_1 := \overline{V_1}$. Assume now, that compact subsets $C_1,\ldots,C_k$ satisfying $V_j \subseteq C_j$ and $C_{j - 1} \subseteq \topint{C_j}$ for all $1 \leq j \leq k$ (where $C_0 := \emptyset$) have already been constructed. Because $C_k$ is compact, there exists some $m_k \leq k+1$ satisfying $C_k \subseteq \bigcup_{j = 1}^{m_k} V_j$. Letting $C_{k+1} := \bigcup_{j = 1}^{m_k} \overline{V_j}$, we see that $C_{k+1}$ is compact and $C_k \subseteq \topint{C_{k+1}}$ as well as $V_{k+1} \subseteq C_{k+1}$. Thus $(C_i)_{i \in \mathbb{N}}$ is an exhaustion of $X$ by compact subsets.
\end{proof}
Now, we can finally prove that all topological manifolds are finite-dimensional.
\begin{thm}
Let $M$ be a topological $n$-manifold. Then $M$ is finite-dimensional and $\mathrm{dim}~M \leq~n$. 
\end{thm}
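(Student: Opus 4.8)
The plan is to assemble the pieces established above: the local bound of Corollary 2.8, the countable decomposition lemma (Lemma 2.9), and the exhaustion lemma (Lemma 2.10).

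Since $M$ is a topological $n$-manifold, it is second-countable, locally compact and Hausdorff, so Lemma 2.10 produces compact subsets $(C_i)_{i \in \mathbb{N}}$ with $C_i \subseteq \topint{C_{i+1}}$ and $M = \bigcup_{i \in \mathbb{N}} C_i$. To put this in the shape required by Lemma 2.9 I would set $C_0 := \emptyset$; then $M = \bigcup_{i=0}^{\infty} C_i$, the inclusion $C_0 \subseteq \topint{C_1}$ is trivial, and each $C_i$ is closed in $M$ because compact subsets of a Hausdorff space are closed.

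It remains to check the dimension hypothesis of Lemma 2.9 with $d = n$. For each $i \in \mathbb{N}_0$ the set $\overline{C_{i+1} \setminus C_i}$ is a closed subset of the compact set $C_{i+1}$ (for $i = 0$ it is simply $C_1$), hence compact, so Corollary 2.8 gives $\dim \overline{C_{i+1} \setminus C_i} \leq n$. Now all the hypotheses of Lemma 2.9 are in place, and it yields that $M$ is finite-dimensional with $\dim M \leq n$.

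I do not expect a genuine obstacle: the statement is a formal consequence of Corollary 2.8 together with Lemmas 2.9 and 2.10. The only thing worth keeping in mind is why the argument has to be routed through Lemma 2.9 at all — one cannot simply cover $M$ by countably many closed Euclidean balls and apply Corollary 2.7, since that corollary only handles \emph{finite} unions, so the passage from the compact case to the full manifold genuinely needs the countable exhaustion packaged in Lemma 2.9, and second-countability of $M$ is exactly what makes that exhaustion available.
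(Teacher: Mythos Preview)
Your proof is correct and follows essentially the same route as the paper: exhaust $M$ by compact sets via Lemma~2.10, observe that each $\overline{C_{i+1}\setminus C_i}$ is compact and hence has dimension at most $n$ by Corollary~2.8, and then invoke Lemma~2.9. Your added remarks (setting $C_0=\emptyset$, closedness of compacta in Hausdorff spaces, and why Corollary~2.7 alone would not suffice) are useful clarifications but do not change the strategy.
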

\begin{proof}
Since $M$ is a second-countable locally compact Hausdorff space, $M$ can be exhausted by compact subsets $(C_i)_{i \in \mathbb{N}}$. Each $C_i$ is closed and furthermore each $\overline{C_{i+1} \setminus C_i}$ is compact since $\overline{C_{i+1} \setminus C_i} \subseteq C_{i+1}$. Thus, $\mathrm{dim}~\overline{C_{i+1} \setminus C_i} \leq n$ by Corollary 2.8. Lemma 2.9 now yields $\mathrm{dim}~M \leq n$.
\end{proof}

\section{The existence of a locally flat closed embedding}
We want to prove that a topological $n$-manifold $M$ admits a locally flat closed embedding $\iota\colon M \to \mathbb{R}^{2n+1}$. As already discussed in the Introduction, we first show that $M$ admits a topological proper embedding $\iota_0\colon M \to \mathbb{R}^{2n+1}$. The main ingredient is the following theorem. 
\begin{thm}
Let $X$ be a second-countable locally compact Hausdorff space such that every compact subspace of $X$ has dimension at most $n \in \mathbb{N}$. Then  $X$ admits an embedding $\iota\colon X \hookrightarrow \mathbb{R}^{2n+1}$ that is \emph{proper}, i.e. ${\iota}^{-1}(K) \subseteq X$ is compact whenever $K \in \mathbb{R}^{2n+1}$ is compact.
\end{thm}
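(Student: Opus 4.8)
The plan is to build $\iota$ one coordinate at a time: a single coordinate will be used to force properness, and the remaining $2n$ coordinates will be put into \emph{general position} by a Baire category argument to force injectivity. First some reductions. Since $X$ is second countable, locally compact and Hausdorff, it is regular and hence metrisable by the Urysohn metrisation theorem; fix a metric $\rho$ on $X$ inducing its topology. A continuous injection $X \to \R^{2n+1}$ that is proper is automatically a closed embedding: a proper map into a locally compact Hausdorff space is closed by \cite[Chapter I, Proposition 11.5]{Bre97}, and a continuous closed injection is a homeomorphism onto its closed image. So it suffices to produce a \emph{proper} continuous \emph{injection} $X \to \R^{2n+1}$. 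By Lemma 2.10, fix an exhaustion $X = \bigcup_{i \in \N} C_i$ by compact sets with $C_i \subseteq \topint{C_{i+1}}$.

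Next I would build the properness coordinate. Since $X$ is metrisable, hence normal, Urysohn's lemma provides continuous maps $v_i \colon X \to [0,1]$ with $v_i \equiv 0$ on $C_i$ and $v_i \equiv 1$ on $X \setminus \topint{C_{i+1}}$. Near each point all but finitely many $v_i$ vanish, so $p := \sum_{i \geq 1} v_i \colon X \to [0,\infty)$ is a well-defined continuous map; and if $x \notin C_m$ then $v_i(x) = 1$ for $i = 1,\dots,m-1$, so $p(x) \geq m-1$. Hence $p^{-1}([0,R])$ is contained in some $C_m$ and is therefore compact, i.e.\ $p$ is proper. It follows that for \emph{any} continuous $\eta = (\eta_1,\dots,\eta_{2n+1}) \colon X \to \R^{2n+1}$ with $\abs{\eta_1} \leq 1$ on $X$, the map $\iota_\eta := (p + \eta_1, \eta_2, \dots, \eta_{2n+1})$ is still proper, since its first coordinate stays within distance $1$ of $p$; properness is thus automatic and only injectivity must be arranged.

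To choose $\eta$ making $\iota_\eta$ injective I would run a Baire category argument. Let $\mathcal{G}$ be the set of continuous maps $\eta \colon X \to \R^{2n+1}$ with $\sup_{x \in X} \norm{\eta(x)} \leq 1$; this is a complete metric space under the supremum metric, hence a Baire space. For $i,k \in \N$ set
\[
  V_{i,k} := \setc{\eta \in \mathcal{G}}{\iota_\eta(x) \neq \iota_\eta(y) \text{ for all } x,y \in C_i \text{ with } \rho(x,y) \geq 1/k}.
\]
Each $V_{i,k}$ is open, since $(x,y) \mapsto \norm{\iota_\eta(x) - \iota_\eta(y)}$ attains a positive minimum on the compact set $\setc{(x,y) \in C_i \times C_i}{\rho(x,y) \geq 1/k}$ and this survives uniformly small perturbations of $\eta$. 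Each $V_{i,k}$ is dense: given $\eta \in \mathcal{G}$, after replacing it by $(1-\delta)\eta$ for small $\delta > 0$ we may assume $\sup\norm{\eta} \leq 1-\delta$, leaving room $\delta$ to move; by the general position lemma below there is a continuous $\psi \colon C_i \to \R^{2n+1}$ with $\sup\norm{\psi} < \delta$ such that $x \mapsto \iota_\eta(x) + \psi(x)$ separates points of $C_i$ at distance $\geq 1/k$; extend $\psi$ over $X$ by the Tietze extension theorem keeping $\sup\norm{\psi} < \delta$, and put $\eta' := \eta + \psi$, which lies in $\mathcal{G} \cap V_{i,k}$ and is close to $\eta$. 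By the Baire property $\bigcap_{i,k \in \N} V_{i,k} \neq \emptyset$; any $\eta$ in this intersection gives $\iota := \iota_\eta$, which is proper, and injective because any two distinct points of $X$ lie in a common $C_i$ and are at distance $\geq 1/k$ for some $k$. Thus $\iota$ is a proper closed embedding $X \hookrightarrow \R^{2n+1}$.

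Everything above is routine except the general position lemma used in the density step, which I expect to be the main obstacle: \emph{if $C$ is a compact metric space with $\dim C \leq n$, then for every continuous $f \colon C \to \R^{2n+1}$, every $k$ and every $\delta > 0$ there is a continuous $g \colon C \to \R^{2n+1}$ with $\sup\norm{f-g} < \delta$ separating any two points of $C$ at distance $\geq 1/k$} (one can in fact get $g$ injective, but this suffices). This is the compact case of the Menger--N\"obeling embedding theorem. One proves it by covering $C$ by finitely many open sets of diameter $< 1/k$ and of order $\leq n$ (possible since $\dim C \leq n$), so that the nerve $K$ has $\dim K \leq n$ and the partition-of-unity map $\kappa \colon C \to |K| \subseteq \R^{N}$ is uniformly close to $f$ and is injective on pairs of points lying in no common set of the cover, hence on pairs at distance $\geq 1/k$; one then moves the finitely many vertices of $|K|$ into general position in $\R^{2n+1}$ so that disjoint simplices of $K$ have disjoint images — possible precisely because the simplices have dimension $\leq n$ and the ambient dimension is $2n+1$ — obtaining an embedding $|K| \hookrightarrow \R^{2n+1}$, and composes with $\kappa$. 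This is where both the hypothesis on compact subspaces and the target dimension $2n+1$ enter; the lemma may also be quoted from \cite{Mun00}.
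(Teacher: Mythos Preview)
Your argument is correct and follows essentially the same route as the paper: a Baire category argument in a complete function space, with the density step supplied by the nerve/partition-of-unity construction (the compact Menger--N\"obeling lemma), and properness guaranteed by staying within distance $1$ of an initial map that tends to infinity. The only cosmetic difference is that you single out one coordinate for the proper function $p$ and parametrise by bounded perturbations $\eta$, whereas the paper works directly in $C(X,\R^{2n+1})$ with the bounded metric $\rho$ and starts from an arbitrary $f$ with $f(x)\to\infty$; your $\iota_\eta$ is just $f_0+\eta$ with $f_0=(p,0,\dots,0)$, so the two setups are interchangeable.
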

\begin{coro} 
Every $n$-manifold $M$ admits a locally flat closed embedding $\iota\colon M \hookrightarrow \mathbb{R}^{2n+1}$. 
\end{coro}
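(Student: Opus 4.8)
The plan is to produce first a proper topological embedding and then upgrade it to a locally flat one. A topological $n$-manifold $M$ is, by definition, second-countable, locally compact and Hausdorff, and by Corollary 2.8 every compact subspace of $M$ has topological dimension at most $n$; hence $X = M$ satisfies the hypotheses of Theorem 3.1, which yields a proper topological embedding $\iota_0\colon M \hookrightarrow \mathbb{R}^{2n+1}$. Since $M$ and $\mathbb{R}^{2n+1}$ are locally compact Hausdorff spaces, a proper continuous map between them is automatically closed (\cite[Chapter I, Proposition 11.5]{Bre97}), so $\iota_0$ is already a closed embedding and only local flatness remains to be arranged.

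For $n \geq 2$ this is exactly what Theorem 1.4 provides. Apply it with $N = \mathbb{R}^{2n+1}$, which has no boundary, to the proper embedding $\iota_0$, taking (say) $\varepsilon = 1$: the codimension of $M$ in $\mathbb{R}^{2n+1}$ is $(2n+1) - n = n+1 \geq 3$ for $n \geq 2$, so the dimension hypothesis of Theorem 1.4 is met, and we obtain a proper locally flat embedding $\iota\colon M \hookrightarrow \mathbb{R}^{2n+1}$ (the $\varepsilon$-closeness to $\iota_0$ is not needed). As before, properness forces $\iota$ to be closed, so $\iota(M)$ is a closed, locally flat submanifold of $\mathbb{R}^{2n+1}$, as required.

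It remains to treat $n = 0$ and $n = 1$ by hand, since for these the codimension $n+1 \leq 2$ and Theorem 1.4 is unavailable. For $n = 0$ a manifold is an at most countable discrete space, and any injection onto a subset of $\{0, 1, 2, \dots\} \subseteq \mathbb{R} = \mathbb{R}^{2 \cdot 0 + 1}$ is a closed, locally flat embedding. For $n = 1$ the classification of $1$-manifolds shows that each connected component of $M$ is homeomorphic to one of $\mathbb{S}^1$, $[0,1]$, $[0,\infty)$ or $\mathbb{R}$, each of which admits an evident locally flat closed embedding into $\mathbb{R}^2 \subseteq \mathbb{R}^3$; placing the (at most countably many) components into pairwise distinct parallel copies $\mathbb{R}^2 \times \{k\} \subseteq \mathbb{R}^3$, $k \in \mathbb{Z}$, assembles them into a proper — hence closed — locally flat embedding $M \hookrightarrow \mathbb{R}^3$.

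Given Theorems 3.1 and 1.4 together with the classification of low-dimensional manifolds, there is no serious obstacle within this corollary itself: the substantive content lies in Theorem 3.1 (built in the remainder of Section 3 out of the dimension estimate of Corollary 2.8) and in the imported Theorem 1.4. The only points genuinely requiring care are checking that $M$ meets the hypotheses of Theorem 3.1 — immediate from Corollary 2.8 and the definition of a manifold — verifying that properness survives the perturbation of Theorem 1.4 so that the final embedding is still closed, and noticing that codimension drops below $3$ when $n \leq 1$, forcing the separate elementary treatment of those cases.
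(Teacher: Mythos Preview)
Your proof is correct and follows the same route as the paper: invoke Corollary~2.8 to feed $M$ into Theorem~3.1, then apply Theorem~1.4 to the resulting proper embedding, and finally use that proper maps between locally compact Hausdorff spaces are closed. Your write-up is in fact more careful than the paper's own proof of this corollary, which only spells out the case $n\geq 2$ and relegates $n=1$ to a remark in the Introduction (and does not mention $n=0$ at all); your explicit verification of the codimension hypothesis and your hand treatment of $n\leq 1$ fill those small gaps.
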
 
\begin{proof} 
By Corollary 2.8, $M$ satisfies the conditions of Theorem 3.1, thus $M$ admits a topological proper embedding $\iota_0\colon M \hookrightarrow \mathbb{R}^{2n+1}$. If $n \geq 2$, Theorem 1.4 yields a locally flat proper embedding $\iota\colon M \hookrightarrow \mathbb{R}^{2n+1}$. The embedding $\iota$ is also closed since it is a proper map between locally compact Hausdorff spaces (\cite[Chapter I, Proposition 11.5]{Bre97}).
\end{proof}

\vspace*{0.5cm}
If $X$ is a topological space, we denote by $C(X,\mathbb{R}^N)$ the set of all continuous maps $X \to \mathbb{R}^N$. We shall equip $\mathbb{R}^N$ with the metric 
\begin{align*}\delta(x,y) := \min\{1,\lVert x - y \rVert_\infty\}, \end{align*} where $x,y \in \mathbb{R}^N$.
Then $\delta$ induces the same topology on $\mathbb{R}^N$ as $\lVert \cdot \rVert_\infty$ and $(\mathbb{R}^N, \delta)$ is a complete metric space. We equip $C(X,\mathbb{R}^N)$ with the metric 
\begin{align*}\rho(f,g) := \sup_{x \in X} \delta(f(x),g(x)),\end{align*} where $f,g \in C(X,\mathbb{R}^N)$. Since $(R^N,\delta)$ is complete, so is $(C(X,\mathbb{R}^N),\rho)$.

\vspace*{0.5cm}

\begin{define}
Let $X$ be a topological space and let $f \in C(X,\mathbb{R}^N)$. We write $f(x) \xrightarrow{x \to \infty} \infty$, if for all $R > 0$ there exists some compact subset $C \subseteq X$ such that $\lVert f(x) \rVert_\infty > R$ for all $x \in X \setminus C$.
\end{define}
\begin{rem}
Note that $f(x) \xrightarrow{x \to \infty} \infty$ whenever $X$ is compact.
\end{rem}

\begin{lem}
Let $X$ be a topological space and let $f,g \in C(X,\mathbb{R}^N)$ such that $\rho(f,g) < 1$ and $f(x) \xrightarrow{x \to \infty} \infty$. Then also $g(x) \xrightarrow{x \to \infty} \infty$.
\end{lem}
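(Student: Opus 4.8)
The plan is to exploit the fact that $\rho(f,g)<1$ forces a \emph{uniform} bound on the pointwise sup-distance between $f$ and $g$, and then to combine this with the definition of $f(x)\xrightarrow{x\to\infty}\infty$ via the reverse triangle inequality.

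First I would set $c:=\rho(f,g)<1$ and observe that for every $x\in X$ one has $\lVert f(x)-g(x)\rVert_\infty\le c$. Indeed, if $\lVert f(x)-g(x)\rVert_\infty\ge 1$ held for some $x$, then $\delta(f(x),g(x))=\min\{1,\lVert f(x)-g(x)\rVert_\infty\}=1>c$, contradicting $\rho(f,g)=c$; hence $\lVert f(x)-g(x)\rVert_\infty<1$, so that $\delta(f(x),g(x))=\lVert f(x)-g(x)\rVert_\infty$, and taking the supremum over $x$ gives the claimed bound.

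Next, given $R>0$, I would apply the hypothesis $f(x)\xrightarrow{x\to\infty}\infty$ with the value $R+1$ in place of $R$, obtaining a compact subset $C\subseteq X$ such that $\lVert f(x)\rVert_\infty>R+1$ for all $x\in X\setminus C$. The reverse triangle inequality then yields, for every $x\in X\setminus C$,
\[
\lVert g(x)\rVert_\infty\ \ge\ \lVert f(x)\rVert_\infty-\lVert f(x)-g(x)\rVert_\infty\ >\ (R+1)-c\ >\ R,
\]
where the last inequality uses $c<1$. Since $R>0$ was arbitrary and $C$ is compact, this is precisely the statement that $g(x)\xrightarrow{x\to\infty}\infty$.

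I expect no real obstacle here: the argument is a two-line estimate once the right constant is chosen. The only place deserving a moment's care is the first step, where one must use $c<1$ to upgrade the bound $\delta(f(x),g(x))\le c$ on the truncated metric to the genuine bound $\lVert f(x)-g(x)\rVert_\infty\le c$; this is exactly why the hypothesis is stated with the strict inequality $\rho(f,g)<1$ rather than merely $\rho(f,g)<\infty$.
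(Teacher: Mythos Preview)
Your proof is correct and follows essentially the same approach as the paper's: choose a compact $C$ witnessing $\lVert f(x)\rVert_\infty>R+1$ off $C$, then use the triangle (equivalently, reverse triangle) inequality together with the uniform bound $\lVert f(x)-g(x)\rVert_\infty<1$ to conclude $\lVert g(x)\rVert_\infty>R$ off $C$. If anything, your write-up is slightly more careful than the paper's in explicitly justifying why the bound on the truncated metric $\delta$ upgrades to a bound on $\lVert\cdot\rVert_\infty$.
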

\begin{proof}
Let $R > 0$. There exists some compact subset $C \subseteq X$ such that $\lVert f(x) \rVert_\infty > R + 1$ whenever $x \in X \setminus C$. The triangle inequality yields 
\begin{align*}\lVert f(x) \rVert_\infty \leq \lVert g(x) \rVert_\infty + \lVert f(x) - g(x) \rVert_\infty < \lVert g(x) \rVert_\infty + 1 \end{align*} and hence $\lVert g(x) \rVert_\infty > R$ whenever $x \in X \setminus C$. This proves $g(x) \xrightarrow{x \to \infty} \infty$.
\end{proof}

\begin{lem}
Let $f \in C(X,\mathbb{R}^N)$ such that $f(x) \xrightarrow{x \to \infty} \infty$. Then $f$ is \emph{proper}, i.e. $f^{-1}(K)$ is compact whenever $K \subseteq \mathbb{R}^N$ is compact. 
\end{lem}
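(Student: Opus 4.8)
The plan is to show that $f^{-1}(K)$ is a closed subset of a compact subspace of $X$, and therefore compact. First I would use that $K \subseteq \mathbb{R}^N$ is compact, hence bounded with respect to $\lVert \cdot \rVert_\infty$, to choose some $R > 0$ such that $\lVert y \rVert_\infty \leq R$ for all $y \in K$. Applying the hypothesis $f(x) \xrightarrow{x \to \infty} \infty$ to this $R$, I obtain a compact subset $C \subseteq X$ such that $\lVert f(x) \rVert_\infty > R$ for all $x \in X \setminus C$. In particular no point of $X \setminus C$ is mapped into $K$, so $f^{-1}(K) \subseteq C$.

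Next I would observe that $f^{-1}(K)$ is closed in $X$: since $\mathbb{R}^N$ is Hausdorff, the compact set $K$ is closed, and $f$ is continuous, so $f^{-1}(K)$ is closed. Because $f^{-1}(K) \subseteq C$, we have $f^{-1}(K) = f^{-1}(K) \cap C$, which is closed in the subspace $C$. A closed subset of a compact space is compact, so $f^{-1}(K)$ is compact, as desired.

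There is no real obstacle here; the only points that require a moment of care are remembering that one needs $K$ to be closed (which is why the Hausdorff property of $\mathbb{R}^N$ enters) in order to conclude that $f^{-1}(K)$ is closed, and applying the fact that closed subsets of compact spaces are compact inside the subspace $C$ rather than inside $X$, which need not itself be compact.
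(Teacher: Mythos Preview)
Your proof is correct and follows essentially the same approach as the paper: bound $K$ by some $R>0$, use the hypothesis to trap $f^{-1}(K)$ inside a compact set $C$, and conclude that $f^{-1}(K)$ is compact as a closed subset of $C$. The only cosmetic difference is that the paper writes $K \subseteq [-R,R]^N$ rather than $\lVert y \rVert_\infty \leq R$, and leaves implicit the reason $f^{-1}(K)$ is closed, which you spell out.
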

\begin{proof}
Let $K \subseteq \mathbb{R}^N$ be compact. Thus, $K \subseteq [-R,R]^N$ for some $R > 0$. We can find a compact subset $C \subseteq X$ such that $\lVert f(x) \rVert_\infty > R$ whenever $x \in X \setminus C$. Therefore, $f^{-1}(K) \subseteq f^{-1}\left([-R,R]^N\right) \subseteq C$. This shows that $f^{-1}(K)$ is compact as a closed subset of the compact space $C$. Therefore, $f$ is proper. 
\end{proof}
Suppose $X$ is a second-countable locally compact Hausdorff space. We can choose a metric $d$ on $X$ that induces the topology of $X$ (see \cite[Chapter I, Theorem 12.12]{Bre97}). For every $f \in C(X,\mathbb{R}^N)$ and $C \subseteq X$ compact, we let 
\begin{align*}\Delta(f,C) := \sup_{z \in f(C)} \mathrm{diam}~f^{-1}(\{z\}). \end{align*}

\begin{lem}
Given $\varepsilon > 0$ and $C \subseteq X$ compact, we let 
\begin{gather*}U_{\varepsilon}(C) := \{f \in C(X,\mathbb{R}^N) \; | \; \Delta(f,C) < \varepsilon \}. \end{gather*} 
Then $U_{\varepsilon}(C)$ is open in $C(X,\mathbb{R}^N)$.
\end{lem}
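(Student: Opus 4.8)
The plan is to prove the contrapositive. Suppose $U_{\varepsilon}(C)$ fails to be open; then there are $f \in U_{\varepsilon}(C)$ and a sequence $g_k \to f$ in $(C(X,\mathbb{R}^N),\rho)$ with $\Delta(g_k,C) \geq \varepsilon$ for every $k$, and the aim is to contradict $\Delta(f,C) < \varepsilon$. For each $k$, the inequality $\Delta(g_k,C) \geq \varepsilon$ lets me pick $c_k \in C$ with $\mathrm{diam}\, g_k^{-1}(\{g_k(c_k)\}) > \varepsilon - \tfrac{1}{k}$; writing $z_k := g_k(c_k) \in g_k(C)$, I can then choose $x_k, y_k \in g_k^{-1}(\{z_k\})$ with $d(x_k,y_k) > \varepsilon - \tfrac{2}{k}$.

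The routine half of the argument is a compactness extraction. Since $C$ is a compact metric space, after passing to a subsequence I may assume $c_k \to c \in C$; since $\rho(g_k,f) \to 0$ and $f$ is continuous, $z_k = g_k(c_k) \to f(c) =: z \in f(C)$, and likewise $f(x_k) \to z$ and $f(y_k) \to z$, using $g_k(x_k) = g_k(y_k) = z_k \to z$ together with $\delta(f(\cdot),g_k(\cdot)) \leq \rho(f,g_k) \to 0$. If I also had convergent subsequences $x_k \to x$ and $y_k \to y$, then continuity of $f$ would force $f(x) = f(y) = z$ while $d(x,y) = \lim_k d(x_k,y_k) \geq \varepsilon$, so $\mathrm{diam}\, f^{-1}(\{z\}) \geq \varepsilon$ with $z \in f(C)$, contradicting $f \in U_{\varepsilon}(C)$.

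The hard part is precisely this last point: a priori the fibre witnesses $x_k, y_k$ may escape to infinity even though their $g_k$-values converge, so the real content is to trap them inside one fixed compact subset of $X$. The way I would organise this is to reduce the statement at the outset to the claim that there is some $r > 0$ with $\sup_{c \in C} \mathrm{diam}\, f^{-1}\bigl(\overline{B_{2r}(f(c))}\bigr) < \varepsilon$, where $B_s(w)$ denotes the open ball of radius $s$ about $w$ in $\mathbb{R}^N$. This suffices, because any $g$ with $\rho(g,f) < r$ and any $z' = g(c') \in g(C)$ satisfy $g^{-1}(\{z'\}) \subseteq f^{-1}\bigl(\overline{B_{2r}(f(c'))}\bigr)$, whence $\Delta(g,C) < \varepsilon$. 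To prove the claim I would work one fibre at a time: for fixed $c$ the sets $f^{-1}\bigl(\overline{B_s(f(c))}\bigr)$ form a nested family that shrinks, as $s \downarrow 0$, to $f^{-1}(\{f(c)\})$, whose diameter is $< \varepsilon$; so some $s_c > 0$ works at $c$, continuity of $f|_C$ makes $\tfrac{1}{2}s_c$ work on an open neighbourhood of $c$ in $C$, and a finite subcover of $C$ then produces a uniform $r$. The step I would scrutinise most carefully — the genuine obstacle — is showing that these shrinking preimages are bounded, so that their diameters really do decrease down to that of the central fibre: for a general continuous $f$ the preimage of an arbitrarily small ball about $f(c)$ can still be unbounded, so some extra input (the local compactness of $X$ used through properness of the maps in play, or a direct argument in the situation at hand) has to enter here; the finite-patching estimate is then straightforward by comparison.
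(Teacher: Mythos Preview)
Your worry about the fibre witnesses $x_k,y_k$ escaping to infinity is well founded --- so well founded that under the literal reading of the definition the lemma is actually false, and no ``extra input'' of the sort you hope for exists. Take $X=\mathbb{R}$, $N=1$, $C=\{0\}$, and let $f$ be any continuous function with $f(0)=0$, $f(n)=1/n$ for $n\in\mathbb{N}$, and $f>0$ on $\mathbb{R}\setminus\{0\}$ (for instance piecewise linear with $f(n+\tfrac12)=1$ and $f(x)=-x$ for $x\le 0$). Then $f^{-1}(\{0\})=\{0\}$, so $\Delta(f,C)=0$ and $f\in U_\varepsilon(C)$ for every $\varepsilon>0$. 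Now set $g_k:=f+h_k$ where $h_k(x)=-\tfrac{1}{k}\max(0,1-4|x-k|)$; then $\rho(g_k,f)\le 1/k\to 0$, while $g_k(0)=g_k(k)=0$, so $\Delta(g_k,C)\ge k$. Thus $U_\varepsilon(C)$ is not open. In particular your shrinking--preimage claim $\mathrm{diam}\,f^{-1}\bigl(\overline{B_s(f(c))}\bigr)\searrow \mathrm{diam}\,f^{-1}(\{f(c)\})$ fails here for $c=0$, since the left side is $+\infty$ for every $s>0$.

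What is really intended --- and what the paper's own argument proves and its later application (injectivity of a limit map) needs --- is the quantity with fibres \emph{restricted to $C$}:
\[
\Delta(f,C)\;=\;\sup\bigl\{\,d(x,y):x,y\in C,\ f(x)=f(y)\,\bigr\}\;=\;\sup_{z\in f(C)}\mathrm{diam}\bigl(f^{-1}(\{z\})\cap C\bigr).
\]
With this reading your sequential argument is complete and clean: the witnesses $x_k,y_k$ now lie in $C$, so compactness of $C$ hands you the convergent subsequences for free and the ``hard part'' evaporates entirely.

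The paper's route is different in flavour: no sequences, no contradiction. One picks $b$ with $\Delta(f,C)<b<\varepsilon$, notes that on the compact set $A=\{(x,y)\in C\times C:d(x,y)\ge b\}$ the continuous function $(x,y)\mapsto\delta(f(x),f(y))$ is strictly positive (since $\Delta(f,C)<b$), hence bounded below by some $2r>0$; then any $g$ with $\rho(f,g)<r$ still separates every pair in $A$, so $g(x)=g(y)$ with $x,y\in C$ forces $d(x,y)<b$, giving $\Delta(g,C)\le b<\varepsilon$. This produces an explicit radius and sidesteps any discussion of preimages of balls; once you read $\Delta$ in the restricted sense, your contrapositive argument is an equally valid alternative.
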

\begin{proof}
Let $f \in U_{\varepsilon}(C)$ and let $b > 0$ such that $\Delta(f,C) < b < \varepsilon$. Furthermore, let 
\begin{align*}A := \{(x,y) \in C \times C \; | \; d(x,y) \geq b \}. \end{align*}
Since $A$ is closed in the compact space $C \times C$, $A$ is also compact. The continuous map 
\begin{align*} X \times X \to \mathbb{R}, (x,y) \mapsto \delta(f(x),f(y)) \end{align*} is strictly positive on $A$ and thus $r := \frac{1}{2} \cdot \min_{(x,y) \in A} \delta(f(x),f(y))$ satisfies $r > 0$. We will show that $B_{\rho}(f,r) \subseteq U_{\varepsilon}(C)$: Let $g \in B_{\rho}(f,r)$, i.e. $\rho(f,g) < r$. If $(x,y) \in A$, then $\delta(f(x),f(y)) \geq 2r$. Since $\delta(f(x),g(x)) < r$ and $\delta(f(y),g(y)) < r$, we get $g(x) \neq g(y)$. Thus, by contraposition, if $g(x) = g(y)$ for some $x,y \in C$, then $(x,y) \notin A$ and thus $d(x,y) < b$. \\ This shows $\Delta(g,C) \leq b < \varepsilon$.
\end{proof}

We recall the notion of \emph{affine independence.}
\begin{define}
A set of points $S \subseteq \mathbb{R}^N$ is \emph{affinely independent} if for all distinct $p_0,\ldots,p_k \in S$ and $\alpha_0,\ldots,\alpha_k \in \mathbb{R}$, the equations \begin{align*}\sum_{i = 0}^k \alpha_i \cdot p_i = 0 \; \; \mathrm{and} \; \; \sum_{i = 0}^k \alpha_i = 0 \end{align*} imply that $\alpha_0 = \cdots = \alpha_k = 0$.
\end{define}
Geometrically speaking, if $S \subseteq \mathbb{R}^N$ is affinely independent and $\mathrm{card}(S) = k$, then the points of $S$ uniquely determine a $k$-plane in $\mathbb{R}^N$. \\ For what follows, we need to recall the notion of \textit{Baire spaces.}
\begin{define}
Let $X$ be a topological space.
\begin{enumerate}
\item A subset of $X$ is \emph{nowhere dense} if its closure has empty interior.
\item $X$ is a \emph{Baire space} if $\bigcup_{i = 1}^\infty A_i$ has empty interior whenever $A_1,A_2,\ldots$ are closed nowhere dense subsets of $X$.
\end{enumerate} 
\end{define}
\begin{thm}[Baire Category Theorem]
If $X$ is either a locally compact Hausdorff space or a complete metric space, then $X$ is a Baire space.
\end{thm}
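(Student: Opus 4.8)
The plan is to prove the equivalent statement that a countable intersection of dense open subsets of $X$ is dense. Indeed, a set $A$ is closed and nowhere dense precisely when $X \setminus A$ is open and dense, and $\bigcup_{i=1}^\infty A_i$ has empty interior precisely when its complement $\bigcap_{i=1}^\infty (X \setminus A_i)$ is dense. So let $U_1, U_2, \ldots$ be dense open subsets of $X$, fix a nonempty open set $W \subseteq X$, and produce a point of $W \cap \bigcap_{i=1}^\infty U_i$. I would construct recursively a decreasing sequence of nonempty open sets $W = W_0 \supseteq W_1 \supseteq W_2 \supseteq \cdots$ with $\overline{W_n} \subseteq W_{n-1} \cap U_n$ for every $n \geq 1$, together with an extra ``smallness'' condition adapted to the hypothesis on $X$. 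At each stage this is possible because $W_{n-1} \cap U_n$ is open and nonempty (density of $U_n$), so it contains a point around which one may shrink.

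In the complete metric case I would take $W_n = B(x_n, r_n)$ an open ball with $r_n < 1/n$, chosen small enough that $\overline{B(x_n, r_n)} \subseteq W_{n-1} \cap U_n$; this is possible since the open balls centred at $x_n$ form a neighbourhood basis and halving the radius puts the closed ball inside. For $m \geq n$ one has $x_m \in \overline{W_m} \subseteq W_n \subseteq B(x_n, r_n)$, so $(x_n)_n$ is Cauchy; by completeness it converges to some $x$, and since the tail of the sequence lies in $\overline{W_n}$ for each $n$, we get $x \in \bigcap_n \overline{W_n} \subseteq W \cap \bigcap_n U_n$. In the locally compact Hausdorff case I would instead arrange that each $\overline{W_n}$ is compact and contained in $W_{n-1} \cap U_n$. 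This rests on the standard fact that in a locally compact Hausdorff space every point $x$ lying in an open set $O$ has an open neighbourhood $V$ with $\overline{V}$ compact and $\overline{V} \subseteq O$; one proves it by taking any neighbourhood $W'$ of $x$ with $\overline{W'}$ compact, and then, if $\overline{W'} \not\subseteq O$, separating $x$ by disjoint open sets from the compact set $\overline{W'} \setminus O$ (using Hausdorffness) and intersecting. Granting this, the $\overline{W_n}$ form a nested sequence of nonempty compact sets, so $\bigcap_n \overline{W_n} \neq \emptyset$ by the finite intersection property, and any point of this intersection lies in $W \cap \bigcap_n U_n$.

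I expect the main obstacle to be the locally compact Hausdorff case, and within it precisely the regularity lemma that lets one insert an open set with compact closure inside an arbitrary open neighbourhood; once that is available, the recursion and the finite-intersection argument are routine. The metric case is essentially automatic once one observes that closed balls of sufficiently small radius lie inside $W_{n-1} \cap U_n$.
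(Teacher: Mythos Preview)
Your argument is correct and is the standard proof of the Baire Category Theorem: reformulate via complements as ``countable intersections of dense open sets are dense,'' build a nested sequence of nonempty open sets with controlled closures, and extract a point of the intersection using completeness (Cauchy sequence of centres) or compactness (finite intersection property). The regularity step in the locally compact Hausdorff case is handled correctly.

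The paper, however, does not prove this theorem at all: it simply states it and refers the reader to \cite[Chapter~I, Theorem~17.1]{Bre97}. So there is nothing to compare your approach against within the paper itself; you have supplied a full proof where the paper supplies only a citation. If anything, your write-up could serve as a self-contained replacement for that citation.
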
 
For a proof see \cite[Chapter I, Theorem 17.1]{Bre97}.
\begin{lem}
Let $x_1, \ldots, x_n \in \mathbb{R}^N$ be distinct points and let $r > 0$. Then, there exist distinct points $y_1,\ldots,y_n \subseteq \mathbb{R}^N$ such that:
\begin{enumerate}
  \item $\lVert x_i - y_i \rVert_\infty < r$ for all $1 \leq i \leq n$.
  \item $\{y_1,\ldots,y_n\}$ is in \emph{general position}, i.e. every subset $S \subseteq \{y_1,\ldots,y_n\}$ such that \\ ${\mathrm{card}(S) \leq N+1}$ is affinely independent. 
\end{enumerate}
\end{lem}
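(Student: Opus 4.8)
The plan is to build the points $y_1,\ldots,y_n$ one at a time, by induction on the number already placed, at each stage moving the new point inside the prescribed $\lVert\cdot\rVert_\infty$-ball around the corresponding $x_i$ while keeping it off a finite union of proper affine subspaces of $\mathbb{R}^N$. The only external ingredient is that a proper affine subspace of $\mathbb{R}^N$ is closed with empty interior, so that a finite union of such subspaces cannot contain a nonempty open box; this can be quoted from the Baire Category Theorem (Theorem 3.10) applied to $\mathbb{R}^N$.

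I would first record two elementary consequences of Definition 3.8. (a) If $\{p_0,\ldots,p_k\}\subseteq\mathbb{R}^N$ is affinely independent, then the affine subspace it spans has dimension exactly $k$. (b) For any $q\in\mathbb{R}^N$, the set $\{q,p_0,\ldots,p_k\}$ is affinely independent if and only if $\{p_0,\ldots,p_k\}$ is affinely independent and $q$ does not lie in the affine span of $p_0,\ldots,p_k$. Both follow by writing a hypothetical affine dependence $\alpha q+\sum_i\alpha_i p_i=0$ with $\alpha+\sum_i\alpha_i=0$ and separating the cases $\alpha\neq 0$ and $\alpha=0$.

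Now the inductive step. Assume distinct points $y_1,\ldots,y_{m-1}$ have been chosen with $\lVert x_i-y_i\rVert_\infty<r$ and $\{y_1,\ldots,y_{m-1}\}$ in general position (for $m=1$ this is the empty configuration, which is vacuously in general position). A subset $S\subseteq\{y_1,\ldots,y_m\}$ with $\mathrm{card}(S)\leq N+1$ and $y_m\notin S$ is affinely independent by hypothesis. A subset of the form $S=\{y_m\}\cup\{y_i : i\in T\}$ with $T\subseteq\{1,\ldots,m-1\}$ and $\mathrm{card}(T)\leq N$ is affinely independent precisely when $y_m$ avoids the affine span $V_T$ of $\{y_i : i\in T\}$: here fact (b) applies because the induction hypothesis makes $\{y_i : i\in T\}$ itself affinely independent, since $\mathrm{card}(T)\leq N\leq N+1$. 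By fact (a), for $T\neq\emptyset$ the subspace $V_T$ has dimension $\mathrm{card}(T)-1\leq N-1$, hence is closed and nowhere dense; as there are only finitely many such $T$, their union $Z\subseteq\mathbb{R}^N$ is closed with empty interior. Therefore the nonempty open box $\{y\in\mathbb{R}^N : \lVert x_m-y\rVert_\infty<r\}$ is not contained in $Z$, and I pick $y_m$ in the box but outside $Z$. Then $\lVert x_m-y_m\rVert_\infty<r$, and $\{y_1,\ldots,y_m\}$ is in general position; in particular the singletons $T=\{i\}$ give $y_m\neq y_i$ for $i<m$, provided $N\geq 1$ (if $N=0$ then distinctness of the $x_i$ forces $n\leq 1$ and the statement is trivial). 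Iterating for $m=1,\ldots,n$ yields the desired $y_1,\ldots,y_n$.

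The step I expect to be the crux is the inductive one, and in particular the dimension bookkeeping inside it: the induction hypothesis must be used once so that fact (b) is applicable (every $\{y_i : i\in T\}$ with $\mathrm{card}(T)\leq N$ is already affinely independent) and once, via fact (a), to guarantee $\dim V_T\leq N-1$, which is exactly what makes $V_T$ nowhere dense in $\mathbb{R}^N$. A non-inductive alternative would be to work in the space of configurations $(y_1,\ldots,y_n)\in(\mathbb{R}^N)^n$ with $\lVert x_i-y_i\rVert_\infty<r$ for all $i$, which is an open subset of $\mathbb{R}^{Nn}$, hence locally compact Hausdorff and so a Baire space; for each $S\subseteq\{1,\ldots,n\}$ with $2\leq\mathrm{card}(S)\leq N+1$ the set of configurations making $\{y_i : i\in S\}$ affinely dependent is closed (vanishing of the relevant determinants) and nowhere dense (perturb the points of $S$ off one another's affine hulls one at a time), so intersecting the complements of these finitely many sets gives a good configuration. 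I would present the inductive version, since it uses nothing beyond Theorem 3.10 for $\mathbb{R}^N$.
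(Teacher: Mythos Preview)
Your proof is correct and follows essentially the same route as the paper: both argue by induction, at each stage avoiding the finite union of proper affine subspaces spanned by at most $N$ of the previously chosen points, and both justify this via the Baire Category Theorem for $\mathbb{R}^N$. Your write-up is in fact a bit more careful than the paper's, since you explicitly verify distinctness of the $y_i$ (via the singleton subsets $T$) and dispose of the degenerate case $N=0$, neither of which the paper addresses directly.
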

\begin{proof}
We construct the points $y_1,\ldots,y_n$ inductively. Let $y_1 := x_1$. Now, suppose $y_1,\ldots,y_k$ have already been constructed and are in general position as well as $\lVert x_i - y_i \rVert_\infty < r$ for all $1 \leq i \leq k$. Consider the union $P$ of all the affine subspaces that are generated by subsets $A \subseteq \{y_1,\ldots,y_k\}$ such that $\mathrm{card}(A) \leq N$. Since every $l$-plane in $\mathbb{R}^N$ is closed and has empty interior whenever $l < N$, we can deduce $\topint{P} = \emptyset$, because $\mathbb{R}^N$ is a Baire space. Thus we can find an $y_{k+1} \in \mathbb{R}^N \setminus P$ satisfying $\lVert x_{k+1} - y_{k+1} \rVert_\infty < r$ proving (1). \\ Now consider a subset $S \subseteq \{y_1,\ldots,y_{k+1}\}$ such that $\mathrm{card}(S) \leq N+1$. If $y_{k+1} \not \in S$, then $S$ is affinely independent by induction hypothesis. If $y_{k+1} \in S$, then $S$ is affinely independent since $y_{k+1} \not \in P$. This proves (2). \\ Thus this process yields the sought points $y_1,\ldots,y_n$.
\end{proof}
Another fact from point-set topology that we need are partitions of unity and the Tietze Extension Theorem. We shall only state the results here and omit the proof.
\begin{thm}
Let $X$ be a paracompact space and let $\mathcal{U} = \{U_i\}_{i \in I}$ be an open cover of $X$. \\ Then there exists a \emph{partition of unity} $\{\phi_i\}_{i \in I}$ \emph{subordinate to} $\mathcal{U}$, i.e. 
\begin{enumerate}
  \item Each $\phi_i\colon X \to [0,1]$ is a continuous map.
 \item $\mathrm{supp}~\phi_i \subseteq U_i$ for all $i \in I$.
 \item $\{\mathrm{supp}~\phi_i\}_{i \in I}$ is locally finite, i.e. each point $x \in X$ has a neighbourhood that intersects only finitely many of the $\{\mathrm{supp}~\phi_i\}_{i \in I}$.
 \item $\sum_{i \in I} \phi_i(x) = 1$ for all $x \in X$.
\end{enumerate}
\end{thm}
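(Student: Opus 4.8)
The plan is to deduce the statement from three classical ingredients: that a paracompact Hausdorff space is normal, the shrinking lemma for point-finite open covers of a normal space, and Urysohn's lemma; the partition of unity is then obtained by a bump-function construction followed by normalisation. Throughout, $X$ is normal, being paracompact and Hausdorff.

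First I would replace $\mathcal{U}$ by a more convenient cover. By paracompactness, $\mathcal{U}$ has a locally finite open refinement $\mathcal{W}$; choosing for each $W \in \mathcal{W}$ an index $c(W) \in I$ with $W \subseteq U_{c(W)}$ and setting $V_i := \bigcup_{c(W) = i} W$, one obtains a locally finite open cover $\{V_i\}_{i \in I}$ of $X$ with $V_i \subseteq U_i$ (local finiteness is preserved because any set meeting $V_i$ must meet some $W$ with $c(W) = i$). Now apply the shrinking lemma: since $\{V_i\}$ is a locally finite, in particular point-finite, open cover of the normal space $X$, there is an open cover $\{A_i\}_{i \in I}$ with $\overline{A_i} \subseteq V_i$ for all $i$; applying the shrinking lemma again to the still locally finite cover $\{A_i\}$ yields an open cover $\{B_i\}_{i \in I}$ with $\overline{B_i} \subseteq A_i$.

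Next, for each $i \in I$ the sets $\overline{B_i}$ and $X \setminus A_i$ are disjoint closed subsets of $X$, so Urysohn's lemma provides a continuous $\psi_i \colon X \to [0,1]$ with $\psi_i \equiv 1$ on $\overline{B_i}$ and $\psi_i \equiv 0$ on $X \setminus A_i$. Then $\{\psi_i \neq 0\} \subseteq A_i$, so $\mathrm{supp}~\psi_i \subseteq \overline{A_i} \subseteq V_i \subseteq U_i$, which gives (2); and since $\mathrm{supp}~\psi_i \subseteq V_i$ and $\{V_i\}$ is locally finite, the family $\{\mathrm{supp}~\psi_i\}_{i \in I}$ is locally finite, which gives (3). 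Because of this local finiteness the sum $\psi := \sum_{i \in I} \psi_i$ is, near every point, a finite sum of continuous functions, hence continuous; and since $\{B_i\}$ covers $X$, every $x \in X$ lies in some $B_j$, so $\psi(x) \geq \psi_j(x) = 1 > 0$. Setting $\phi_i := \psi_i / \psi$ one checks at once that each $\phi_i \colon X \to [0,1]$ is continuous with $\mathrm{supp}~\phi_i = \mathrm{supp}~\psi_i \subseteq U_i$, that $\{\mathrm{supp}~\phi_i\}_{i \in I}$ is locally finite, and that $\sum_{i \in I} \phi_i \equiv 1$ on $X$, so $\{\phi_i\}_{i \in I}$ is the desired partition of unity subordinate to $\mathcal{U}$.

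The main obstacle is the shrinking lemma. Its proof for an arbitrary point-finite open cover of a normal space is not a one-line argument: one well-orders the index set and shrinks the sets one at a time by transfinite recursion (equivalently, applies Zorn's lemma to the poset of partial shrinkings), and the point-finiteness hypothesis is exactly what is needed to verify that the family remains a cover at limit stages. The implication ``paracompact Hausdorff $\Rightarrow$ normal'' is likewise standard but non-trivial; everything else in the argument is routine bookkeeping.
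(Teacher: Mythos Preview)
The paper does not give its own proof of this theorem; it states it and then says ``For a proof see \cite[Theorem 41.7]{Mun00}.'' Your argument is correct and is essentially the standard proof one finds in that reference: pass to a locally finite refinement indexed by the same set, shrink twice, produce Urysohn bump functions, and normalise.

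One small caveat worth flagging: you begin by asserting that $X$ is normal ``being paracompact and Hausdorff,'' but the statement as recorded in the paper says only ``paracompact space,'' with no Hausdorff hypothesis. Without Hausdorffness, paracompact does not imply normal, so strictly speaking your proof needs that extra assumption. Since the cited theorem in Munkres also assumes Hausdorff and the paper only ever applies the result to manifolds, this is harmless in context, but it is a gap between the stated hypotheses and what your argument actually uses.
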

For a proof see \cite[Theorem 41.7]{Mun00}. Recall that second-countable locally compact Hausdorff spaces are paracompact. 

\begin{thm}[Tietze Extension Theorem]
Let $X$ be a \emph{normal space}, i.e. singletons are closed in $X$ and disjoint closed subsets of $X$ can be separated by disjoint neighbourhoods, and let $A \subseteq X$ be closed and let $a,b \in \mathbb{R}, ~ a < b$.
\begin{enumerate}
\item Every continuous map $f\colon A \to [a,b]$ can be extended to a continuous map $F\colon X \to [a,b]$.
\item Every continuous map $f\colon A \to \mathbb{R}$ can be extended to a continuous map $F\colon X \to \mathbb{R}$.
\end{enumerate}
\end{thm}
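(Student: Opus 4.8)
The plan is to deduce both statements from \emph{Urysohn's Lemma}, which holds in any normal space $X$: for disjoint closed sets $C_0,C_1 \subseteq X$ there is a continuous map $u\colon X \to [0,1]$ with $u|_{C_0}\equiv 0$ and $u|_{C_1}\equiv 1$. Urysohn's Lemma is itself the standard consequence of normality, proved by a dyadic interpolation argument producing a nested family of open sets indexed by dyadic rationals in $(0,1)$; I would cite it rather than reprove it (for a proof see, e.g., \cite{Mun00}).

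For part (1), after composing with the affine homeomorphism carrying $[a,b]$ onto $[-1,1]$ it suffices to treat $f\colon A \to [-1,1]$. The core is an iterated approximation. Given a continuous $g\colon A \to [-c,c]$, the sets $A_- := g^{-1}([-c,-c/3])$ and $A_+ := g^{-1}([c/3,c])$ are disjoint and closed in $X$ (they are closed in $A$, and $A$ is closed in $X$), so Urysohn's Lemma yields a continuous $h\colon X \to [-c/3,c/3]$ with $h\equiv -c/3$ on $A_-$ and $h\equiv c/3$ on $A_+$; a three-case check gives $|g(x)-h(x)| \le \tfrac23 c$ for every $x \in A$. Starting from $g_0 := f$ and setting $g_{n+1} := g_n - h_n|_A$, where $h_n$ is the function just produced for $g = g_n$ and $c = (\tfrac23)^n$, one obtains continuous maps $h_n\colon X \to \mathbb{R}$ with $\sup_X |h_n| \le \tfrac13(\tfrac23)^n$ and $\sup_A |g_n| \le (\tfrac23)^n$. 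Then $F := \sum_{n=0}^\infty h_n$ converges uniformly on $X$, being dominated by the geometric series $\sum_{n\ge 0}\tfrac13(\tfrac23)^n = 1$; hence $F$ is continuous with values in $[-1,1]$, and since the partial sums telescope on $A$ to $\sum_{n=0}^{N-1} h_n|_A = f - g_N$, which converges uniformly to $f$, we get $F|_A = f$.

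For part (2), fix a homeomorphism $\psi\colon \mathbb{R} \to (-1,1)$, for instance $\psi(t) = t/(1+|t|)$. Applying part (1) to $\psi \circ f\colon A \to [-1,1]$ produces a continuous extension $G\colon X \to [-1,1]$; the only obstruction to finishing is that $G$ may attain the forbidden endpoints $\pm 1$. But $Z := G^{-1}(\{-1,1\})$ is closed and disjoint from $A$, since $\psi\circ f$ takes values in the open interval, so Urysohn's Lemma gives $\phi\colon X \to [0,1]$ with $\phi|_Z \equiv 0$ and $\phi|_A \equiv 1$. Then $\phi \cdot G\colon X \to (-1,1)$ still restricts to $\psi\circ f$ on $A$, and $F := \psi^{-1}\circ(\phi\cdot G)\colon X \to \mathbb{R}$ is the desired extension.

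The only genuinely delicate point is the bookkeeping in part (1): verifying the three-case estimate $|g-h|\le\tfrac23 c$ and checking that the partial sums restricted to $A$ really telescope to $f - g_N$. Everything else — normality of closed subspaces, uniform convergence of a dominated series of continuous maps, and the elementary homeomorphism $\mathbb{R} \cong (-1,1)$ — is routine once Urysohn's Lemma is available.
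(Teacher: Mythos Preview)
Your argument is correct and is the classical Urysohn-based approximation proof. The paper does not actually prove this statement: it only records the theorem and refers the reader to \cite[Theorem 35.1]{Mun00}, whose proof is essentially the one you have written out (the same $\tfrac13$--$\tfrac23$ iteration for part (1) and the same open-interval trick with a cutoff for part (2)). So there is nothing to compare beyond noting that you have supplied the details the paper chose to outsource.
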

For a proof see \cite[Theorem 35.1]{Mun00}.
\vspace*{0.5cm}
\begin{lem}
Suppose, $X$ is a second-countable locally compact Hausdorff space such that every compact subspace of $X$ has topological dimension at most $n \in \mathbb{N}$. If $\emptyset \neq C \subseteq X$ is compact, then $U_{\varepsilon}(C)$ is dense in $C(X,\mathbb{R}^N)$ for every $\varepsilon > 0$.
\end{lem}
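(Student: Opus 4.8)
The plan is to run the classical imbedding-by-perturbation argument — make a map $\varepsilon$-injective on $C$ by pushing it, through a partition of unity, towards a system of points in general position — but carried out \emph{relative to} the fixed compact set $C$, with the perturbation kept trivial outside a neighbourhood of $C$. Throughout I fix the metric $d$ on $X$ and use that $N \ge 2n+1$; given $f \in C(X,\mathbb{R}^N)$ and $\delta > 0$ (we may assume $\delta \le 1$), the goal is to produce $g \in U_\varepsilon(C)$ with $\rho(f,g) < \delta$.

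First I would manufacture one good finite open cover of $C$. By continuity of $f$ and of $d$, cover $C$ by open sets $W_p$ ($p\in C$) with $\diam_d W_p < \varepsilon/2$ and $\diam_\infty f(W_p) < \delta/4$. Since $\dim C \le n$, the open cover $\{W_p \cap C\}$ of $C$ admits a refinement of order $\le n$, and by compactness of $C$ finitely many members $R_1,\dots,R_k$ of it already cover $C$. Thicken each $R_j$ back to an open set $\tilde V_j \subseteq X$ with $\tilde V_j \subseteq W_{p_j}$ and $\tilde V_j \cap C = R_j$; then the $\tilde V_j$ are open, cover $C$, satisfy $\diam_d \tilde V_j < \varepsilon/2$ and $\diam_\infty f(\tilde V_j) < \delta/4$, and no point of $C$ lies in more than $n+1$ of them (being a subfamily of an order-$\le n$ cover). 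Now adjoin $\tilde V_0 := X\setminus C$ to get a \emph{finite} open cover of $X$, and by Theorem 3.11 (using that $X$ is paracompact) pick a subordinate partition of unity $\phi_0,\dots,\phi_k$. Since $\operatorname{supp}\phi_0 \subseteq X\setminus C$, we have $\phi_0 \equiv 0$ on $C$, so $\sum_{j=1}^k \phi_j \equiv 1$ there. Finally pick $z_j \in f(\tilde V_j)$ for $j\ge1$ (after a harmless tiny perturbation we may take the $z_j$ pairwise distinct), apply Lemma 3.10 (general position) to obtain distinct $w_1,\dots,w_k \in \mathbb{R}^N$ in general position with $\lVert w_j - z_j\rVert_\infty < \delta/4$, and set $g(x) := \phi_0(x) f(x) + \sum_{j=1}^k \phi_j(x)\, w_j$, a continuous map $X \to \mathbb{R}^N$.

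It then remains to check three things. First, from $\phi_0 + \sum_{j\ge1}\phi_j \equiv 1$ one computes $g(x) - f(x) = \sum_{j=1}^k \phi_j(x)(w_j - f(x))$; whenever $\phi_j(x)\ne0$ we have $x \in \tilde V_j$, hence $f(x), z_j \in f(\tilde V_j)$ and $\lVert w_j - f(x)\rVert_\infty < \delta/2$, so $\lVert g(x)-f(x)\rVert_\infty < \delta/2$ for all $x$ and $\rho(f,g) \le \delta/2 < \delta$. Second, if $x,y \in C$ satisfy $g(x)=g(y)$, then (as $\phi_0$ vanishes on $C$ and $\sum_{j\ge1}\phi_j(x)=\sum_{j\ge1}\phi_j(y)=1$) the numbers $\alpha_j := \phi_j(x)-\phi_j(y)$ satisfy $\sum_{j=1}^k \alpha_j w_j = 0$ and $\sum_{j=1}^k \alpha_j = 0$, and vanish outside the set $T$ of indices with $\phi_j(x)\ne0$ or $\phi_j(y)\ne0$, which has at most $(n+1)+(n+1)=2n+2 \le N+1$ elements; since $\{w_j : j\in T\}$ is then affinely independent, all $\alpha_j = 0$, so $\phi_j(x)=\phi_j(y)$ for every $j$, and picking $j_0$ with $\phi_{j_0}(x)=\phi_{j_0}(y)>0$ gives $x,y \in \operatorname{supp}\phi_{j_0} \subseteq \tilde V_{j_0}$, whence $d(x,y) < \varepsilon/2$. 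Thus $d(x,y) < \varepsilon/2$ for all $x,y\in C$ with $g(x)=g(y)$, so $\Delta(g,C) < \varepsilon$ and $g \in U_\varepsilon(C)$. Third, $f$ and $\delta$ being arbitrary, this shows $U_\varepsilon(C)$ is dense.

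I expect the only real obstacle to be the cover-construction step: one must produce a \emph{single finite} family that is simultaneously fine for $d$ on $X$, fine for $f$, \emph{and} of order $\le n$ on $C$ — which is exactly where the hypothesis $\dim C \le n$ is used — and then reconcile this local family with the global structure of $X$ by adjoining the patch $X\setminus C$, so that the partition-of-unity interpolation is defined on all of $X$ yet leaves $g = f$ away from a neighbourhood of $C$. Everything after that is the standard general-position computation, and the dimension count $2(n+1) \le N+1$ is precisely what makes it run in $\mathbb{R}^{2n+1}$.
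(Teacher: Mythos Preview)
Your argument is correct and follows the same core strategy as the paper: cover $C$ by finitely many small sets of order at most $n$, take a subordinate partition of unity, and replace $f$ on $C$ by the affine combination of points in general position, then run the standard $2(n+1)\le N+1$ affine-independence count. The one genuine technical difference is in how the map is extended from $C$ to all of $X$: the paper defines $\tilde g$ only on $C$ and then uses the Tietze Extension Theorem to extend the bounded difference $f-\tilde g$ to $X$, whereas you build $g$ globally from the start by adjoining the extra patch $\tilde V_0 = X\setminus C$ with its bump function $\phi_0$, so that $g$ interpolates back to $f$ outside a neighbourhood of $C$. Both devices achieve the same thing; yours trades the Tietze theorem for a global partition of unity and has the pleasant side effect that $g=f$ off a neighbourhood of $C$.
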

\begin{proof}
Choose a metric $d$ on $X$ and let $f \in C(X,\mathbb{R}^{2n+1})$ and let $1 > r > 0$. We need to find a $g \in U_{\varepsilon}(C)$ satisfying $\rho(f,g) \leq r$.  Since $C$ is compact, we can cover $C$ by finitely many open (open in $C$) sets $U_1,\ldots,U_m \subseteq C$ such that 
\begin{enumerate}
  \item $\mathrm{diam}~U_i < \frac{\varepsilon}{2}$ for all $1 \leq i \leq m$,
  \item $\mathrm{diam} f(U_i) \leq \frac{r}{2}$ for all $1 \leq i \leq m$,
  \item $\{U_1,\ldots,U_m\}$ has order at most $n$.
\end{enumerate}
Let $\{\phi_1,\ldots,\phi_m\}$ be a partition of unity subordinate to $\{U_1,\ldots,U_m\}$. For each $1 \leq i \leq m$ choose a point $x_i \in U_i$. Then choose $z_1,\ldots,z_m \in \mathbb{R}^{2n+1}$ such that $\lVert f(x_i) - z_i \rVert_\infty < \frac{r}{2}$ and $\{z_1,\ldots,z_m\}$ is in general position (Lemma 3.10). Finally, let 
\begin{align*}\tilde{g}\colon C \to \mathbb{R}^{2n+1}, \; x \mapsto \sum_{i = 1}^m \phi_i(x) \cdot z_i. \end{align*}
\begin{cla}
$\lVert \tilde{g}(x) - f(x) \rVert_\infty < r$ for all $x \in C$. 
\end{cla}
\begin{proof}[Proof of claim]
For all $x \in C$, we have 
\begin{align*}\tilde{g}(x) - f(x) = \sum_{i = 1}^m \phi_i(x) \cdot (z_i - f(x_i)) + \sum_{i = 1}^m \phi_i(x) \cdot (f(x_i) - f(x)), \end{align*}
where we have used $\sum_{i = 1}^m \phi_i(x) = 1$. We have $\lVert z_i - f(x_i) \rVert_\infty < \frac{r}{2}$ for all $1 \leq i \leq m$. Also if $\phi_i(x) \neq 0$, then $x \in U_i$ and since $\mathrm{diam}~f(U_i) < \frac{r}{2}$ by (2), we can conclude $\lVert f(x_i) - f(x) \rVert_\infty < \frac{r}{2}$. Thus, 
\begin{gather*}\lVert \tilde{g}(x) - f(x) \rVert_\infty < \sum_{i = 1}^m \phi_i(x) \cdot \frac{r}{2} + \sum_{i = 1}^m \phi_i(x) \cdot \frac{r}{2} = r. \qedhere \end{gather*} 

\end{proof}
\begin{cla}
If $x,y \in C$ satisfy $\tilde{g}(x) = \tilde{g}(y)$, then $d(x,y) < \frac{\varepsilon}{2}$. 
\end{cla}
\begin{proof}[Proof of claim]
We will prove that $\tilde{g}(x) = \tilde{g}(y)$ implies $x,y \in U_i$ for some $1 \leq i \leq m$. Since $\mathrm{diam}~U_i < \frac{\varepsilon}{2}$ by (1), the claim follows. \\
$\tilde{g}(x) = \tilde{g}(y)$ implies $\sum_{i=1}^m (\phi_i(x) - \phi_i(y)) \cdot z_i = 0$. Because the cover $\{U_1,\ldots,U_m\}$ has order at most $n$ by (3), at most $n+1$ of the numbers $\phi_1(x),\ldots,\phi_m(x)$ and at most $n+1$ of the numbers $\phi_1(y),\ldots, \phi_m(y)$ are non zero. Letting 
\begin{align*}S := \{z_i \; | \; 1 \leq i \leq m \; \mathrm{and} \; \phi_i(x) - \phi_i(y) \neq 0 \},\end{align*} we can deduce $\mathrm{card}(S) \leq 2n+2$.
Note that $\sum_{i = 1}^m (\phi_i(x) - \phi_i(y)) = 0$ and since  $\{z_1,\ldots,z_m\} \subseteq \mathbb{R}^{2n+1}$ are in general position and $\mathrm{card}(S) \leq 2n+1+1$, we can conclude $\phi_i(x) - \phi_i(y) = 0$ for all $1 \leq i \leq m$. Since $\phi_i(x) > 0$ for some $1 \leq i \leq m$, we get $\phi_i(x) = \phi_i(y) > 0$ and thus $x,y \in U_i$. \qedhere
\end{proof}
In conclusion, 
\begin{align*}h\colon C \to [-r,r]^{2n+1}, \; x \mapsto f(x) - \tilde{g}(x)\end{align*}
is a well-defined continuous map. As a locally compact Hausdorff space, $X$ is also normal. Thus, we can apply the Tietze Extension Theorem: $h$ can be extended to a continuous map $H: X \to [-r,r]^{2n+1}$. Letting 
\begin{align*}g\colon X \to \mathbb{R}^{2n+1}, x \mapsto f(x) - H(x), \end{align*}
we have $g|_C = \tilde{g}$ and thus $\Delta(g,C) \leq \frac{\varepsilon}{2} < \varepsilon$ and $\rho(f,g) \leq r$.
\end{proof}
\vspace*{0.5cm}
Let $X$ be as in Theorem 3.1 or Lemma 3.13 and choose a metric $d$ on $X$. Since $(C(X,\mathbb{R}^{2n+1}),\rho)$ is a Baire space, every intersection of countably many open dense subsets of $C(X,\mathbb{R}^{2n+1})$ is again dense in $C(X,\mathbb{R}^{2n+1})$. Consider an exhaustion of $X$ by compact subsets $(C_k)_{k \in \mathbb{N}}$ (Lemma 2.10). Then the set $\bigcap_{k = 1}^\infty U_{1/k}(C_k)$ is dense in $C(X,\mathbb{R}^{2n+1})$.
\begin{lem}
Every $f \in \bigcap_{k = 1}^\infty U_{1/k}(C_k)$ is injective. 
\end{lem}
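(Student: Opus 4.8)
The plan is to argue by contradiction, exploiting that the bound $\Delta(f,C_k) < 1/k$ controls the diameter of the \emph{entire} fibre $f^{-1}(\{z\})$ in $X$ (and not merely its intersection with $C_k$), together with the fact that an exhaustion is nested in the strong sense $C_k \subseteq \mathring{C_{k+1}}$.

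So suppose $f \in \bigcap_{k=1}^\infty U_{1/k}(C_k)$ is not injective: there are $x \neq y$ in $X$ with $f(x) = f(y) =: z$. First I would locate a stage of the exhaustion containing $x$: since $X = \bigcup_{k \in \mathbb{N}} C_k$, there is some $k_0 \in \mathbb{N}$ with $x \in C_{k_0}$, and because $C_k \subseteq \mathring{C_{k+1}} \subseteq C_{k+1}$ for all $k$, in fact $x \in C_k$ for every $k \geq k_0$. Consequently $z = f(x) \in f(C_k)$ for all $k \geq k_0$, so that $\mathrm{diam}~f^{-1}(\{z\})$ is one of the quantities over which the supremum defining $\Delta(f,C_k)$ is taken, giving $\mathrm{diam}~f^{-1}(\{z\}) \leq \Delta(f,C_k) < 1/k$.

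Now $x$ and $y$ both lie in $f^{-1}(\{z\})$, hence $d(x,y) \leq \mathrm{diam}~f^{-1}(\{z\}) < 1/k$ for every $k \geq k_0$. Letting $k \to \infty$ forces $d(x,y) = 0$, i.e. $x = y$, contradicting $x \neq y$. Therefore $f$ is injective.

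There is essentially no hard step here; the only point requiring care is not to conflate $f^{-1}(\{z\})$ with $f^{-1}(\{z\}) \cap C_k$. It is crucial that $\Delta(f,C)$ is defined via the full preimage in $X$, so that once $z \in f(C_k)$ we control the distance between \emph{any} two points of the fibre over $z$ even when only one of the two is known to belong to $C_k$; this is exactly what makes the countable intersection of the sets $U_{1/k}(C_k)$ — rather than some single $U_\varepsilon(C)$ — enough to force global injectivity.
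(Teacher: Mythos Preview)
Your proof is correct and follows essentially the same approach as the paper's: use the exhaustion to place the points in some $C_k$, then let $k\to\infty$ to force $d(x,y)=0$. The only minor difference is that the paper places \emph{both} $x$ and $y$ in $C_k$ for large $k$, whereas you (correctly) observe that it suffices to have just one of them in $C_k$, since $\Delta(f,C_k)$ already bounds the diameter of the full fibre $f^{-1}(\{z\})$ in $X$.
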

\begin{proof}
Let $x,y \in X$ such that $f(x) = f(y)$. There exists some $k_0 \in \mathbb{N}$ such that $x,y \in C_k$ whenever $k \geq k_0$. Because $f \in U_{1/k}(C_k)$, we get $d(x,y) \leq \frac{1}{k}$ for all $k \geq k_0$. Hence, $d(x,y) = 0$ and therefore $x = y$.
\end{proof}
\begin{lem}
If $X$ is a second-countable locally compact Hausdorff space, then there exists a map $f \in C(X,\mathbb{R}^N)$ such that $f(x) \xrightarrow{x \to \infty} \infty$. 
\end{lem}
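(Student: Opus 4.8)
The plan is to produce the required map explicitly from an exhaustion of $X$ by compact sets together with Tietze's extension theorem. First I would invoke \fullref{Lemma 2.10} to obtain an exhaustion $(C_k)_{k \in \mathbb{N}}$ of $X$ by compact subsets with $C_k \subseteq \topint{C_{k+1}}$ and $\bigcup_{k} C_k = X$; set $C_0 := \emptyset$. The idea is to build a continuous function $\varphi\colon X \to [0,\infty)$ that is forced to be large outside each $C_k$, and then take $f := (\varphi, 0, \ldots, 0) \in C(X,\mathbb{R}^N)$, for which $\lVert f(x)\rVert_\infty = \varphi(x)$; this clearly suffices, since if $\varphi \geq R$ on $X \setminus C_k$ for suitable $k = k(R)$ then $\lVert f(x)\rVert_\infty > R$ there. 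So the whole task reduces to constructing $\varphi$.

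To construct $\varphi$, for each $k \geq 1$ the sets $C_{k-1}$ and $X \setminus \topint{C_k}$ are disjoint and closed in $X$ (note $C_{k-1} \subseteq \topint{C_k}$), and $X$ is normal as a locally compact Hausdorff space, so by the Tietze extension theorem (\fullref{Theorem 3.12}, part (1)) there is a continuous $\psi_k \colon X \to [0,1]$ with $\psi_k \equiv 0$ on $C_{k-1}$ and $\psi_k \equiv 1$ on $X \setminus \topint{C_k}$. Now set
\begin{align*}
\varphi(x) := \sum_{k=1}^{\infty} \psi_k(x).
\end{align*}
This sum is locally finite: any $x \in X$ lies in $\topint{C_m}$ for some $m$, and for $k > m$ we have $\topint{C_m} \subseteq C_{k-1}$, so $\psi_k$ vanishes on the neighbourhood $\topint{C_m}$ of $x$ for all $k > m$; hence near $x$ only finitely many terms are nonzero and $\varphi$ is a well-defined continuous function $X \to [0,\infty)$. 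It remains to check the growth: if $x \in X \setminus C_k$, then I claim $\varphi(x) \geq k$ is not quite automatic, but $x \notin C_k$ means $x \notin \topint{C_j}$ for all $j \leq k+1$ — wait, more carefully: $x \notin C_k$ and $C_{j} \subseteq \topint{C_{j+1}} \subseteq C_{j+1}$, so $x \notin C_j$ for $j \leq k$, hence $x \notin \topint{C_j}$ for $j \leq k$, i.e. $x \in X \setminus \topint{C_j}$ and so $\psi_j(x) = 1$ for $j = 1, \ldots, k$. Therefore $\varphi(x) \geq \sum_{j=1}^k \psi_j(x) = k$. Consequently, given $R > 0$, choosing $k := \lceil R \rceil$ gives $\varphi(x) \geq k \geq R$, in fact $\lVert f(x)\rVert_\infty = \varphi(x) > R$ once we take $k = \lceil R \rceil + 1$, for every $x \in X \setminus C_k$; since $C_k$ is compact this is exactly the assertion $f(x) \xrightarrow{x \to \infty} \infty$.

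I do not expect any serious obstacle here — the statement is a soft point-set fact. The only points requiring a little care are (i) verifying local finiteness of the defining series so that $\varphi$ is genuinely continuous, and (ii) getting the inequalities between $C_k$, $\topint{C_k}$ and $C_{k-1}$ straight so that the lower bound $\varphi|_{X \setminus C_k} \geq k$ comes out correctly; both are handled by the observations above. One could alternatively phrase the construction via a partition of unity (\fullref{Theorem 3.11}) subordinate to the open cover $\{\topint{C_{k+1}} \setminus C_{k-1}\}_{k}$ and weight the bump functions by $k$, but the direct Tietze construction above is the cleanest.
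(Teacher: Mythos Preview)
Your proof is correct, but it takes a different route from the paper's. The paper chooses a countable open cover $\{U_k\}_{k\in\mathbb{N}}$ of $X$ by relatively compact sets, picks a subordinate partition of unity $\{\phi_k\}$ via \fullref{Theorem 3.11}, and sets $f(x)=\sum_{k} k\,\phi_k(x)$; the growth condition then follows (implicitly) because $\sum_k\phi_k=1$ forces $f(x)>R$ whenever $x$ lies outside the compact set $\bigcup_{k\leq R}\operatorname{supp}\phi_k$. You instead work directly with the compact exhaustion of Lemma~2.10, manufacture Urysohn-type bumps $\psi_k$ via Tietze, and sum them unweighted. Your approach makes the growth estimate completely explicit (the inequality $\varphi|_{X\setminus C_k}\geq k$ drops out by counting), while the paper's is terser but leaves the verification to the reader. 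It is worth noting that the partition-of-unity alternative you mention in your closing sentence is precisely the argument the paper gives. One small remark: the claim ``$X$ is normal as a locally compact Hausdorff space'' is not true in general---local compactness plus Hausdorff gives only complete regularity---but here second countability saves you (and the paper makes the identical slip in the proof of Lemma~3.13), so no harm done.
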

\begin{proof}
It suffices to consider the case $N = 1$. Let $\{U_k\}_{k \in \mathbb{N}}$ be cover of $X$ by open sets such that $\overline{U_k}$ is compact for each $k \in \mathbb{N}$. Since $X$ is second-countable locally compact Hausdorff, $X$ is paracompact and we can find a partition of unity $\{\phi_k\}_{k \in \mathbb{N}}$ subordinate to $\{U_k\}_{k \in \mathbb{N}}$. Letting 
\begin{align*}f\colon X \to \mathbb{R},\; x \mapsto \sum_{k = 1}^\infty k \cdot \phi_k(x), \end{align*} we see that $f(x) \xrightarrow{x \to \infty} \infty$.
\end{proof}
\vspace*{0.5cm}
We can now proceed to the proof of Theorem 3.1.
\begin{proof}[Proof of Theorem 3.1]
Begin with a continuous map $f\colon X \to \mathbb{R}^{2n+1}$ such that $f(x) \xrightarrow{x \to \infty} \infty$ from Lemma 3.15. Consider an exhaustion of $X$ by compact subsets $(C_k)_{k \in \mathbb{N}}$ (Lemma 2.10). Since $\bigcap_{k = 1}^\infty U_{1/k}(C_k)$ is dense in $C(X,\mathbb{R}^{2n+1})$, we can find $\iota \in \bigcap_{k = 1}^\infty U_{1/k}(C_k)$ such that $\rho(f,\iota) < 1$. Then $\iota$ is injective by Lemma 3.13 and $\iota(x) \xrightarrow{x \to \infty} \infty$ by Lemma 3.4. Then, $\iota\colon M \hookrightarrow \mathbb{R}^{2n+1}$ is a proper embedding by Lemma 3.5, as desired.
\end{proof}

\section{ANRs, ENRs and normal microbundles}
As already stated in the introduction, every embedded smooth $n$-manifold $M \subseteq \mathbb{R}^{2n+1}$ admits a \emph{tubular neighbourhood} $U \subseteq \mathbb{R}^{2n+1}$ with a retraction $r\colon U \to M$. In this last section, we want to explain how one proves the analogous results for topological manifolds. 

\begin{define}
A topological space $X$ is a \emph{Euclidean Neighbourhood Retract (ENR)} if there exists a closed embedding $\iota \colon X \hookrightarrow \mathbb{R}^N$ for some $N \in \mathbb{N}$ and an open neighbourhood $U \subseteq \mathbb{R}^N$ of $\iota(X)$ such that $\iota(X)$ is a retraction of $U$, i.e. there exists a continuous map $r \colon U \to \iota(X)$ satisfying $r|_{\iota(X)} = \mathrm{id}_{\iota(X)}$. 
\end{define}

\begin{define}
A topological space $X$ is called an \emph{Absolute Neighbourhood Retract (ANR)} if for every paracompact space $P$ and every continuous map $f\colon A \to X$, where $A \subseteq P$ is closed, there exists an extension $\overline{f}\colon W \to X$ of $f$ where $W$ is an open neighbourhood of $A$.
\end{define}
Why are we interested in ANRs? Here is the answer.
\begin{lem}
If $X$ is an ANR that admits a closed topological embedding $\iota\colon X \hookrightarrow \mathbb{R}^N$ for some $N \geq 0$, then $X$ is an ENR. 
\end{lem}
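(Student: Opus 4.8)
The plan is to apply the defining property of an ANR directly, taking the ambient Euclidean space as the paracompact witness space and $\iota(X)$ as the closed subset inside it. First I would record the two structural facts that make this possible: $\mathbb{R}^N$ is metrizable, hence paracompact; and $\iota(X) \subseteq \mathbb{R}^N$ is closed, since $\iota$ is by hypothesis a closed embedding. Moreover, $\iota$ is a homeomorphism onto its image, so the inverse $\iota^{-1}\colon \iota(X) \to X$ is a continuous map.

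Next I would invoke Definition 4.2 with $P := \mathbb{R}^N$, with $A := \iota(X)$, and with the continuous map $f := \iota^{-1}\colon \iota(X) \to X$. This produces an open neighbourhood $W \subseteq \mathbb{R}^N$ of $\iota(X)$ together with a continuous extension $\overline{f}\colon W \to X$ satisfying $\overline{f}|_{\iota(X)} = \iota^{-1}$. Then I would set $r := \iota \circ \overline{f}\colon W \to \iota(X)$; this composition is continuous, and for every $y \in \iota(X)$ one computes $r(y) = \iota(\overline{f}(y)) = \iota(\iota^{-1}(y)) = y$. Hence $r$ is a retraction of the open neighbourhood $W$ of $\iota(X)$ onto $\iota(X)$, which together with the closed embedding $\iota$ exhibits $X$ as an ENR in the sense of Definition 4.1.

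There is essentially no obstacle: the lemma is a formal consequence of the two definitions, the only subtlety being that the ANR property yields an extension into $X$, whereas the ENR definition wants a retraction onto $\iota(X) \subseteq \mathbb{R}^N$ — this is precisely repaired by post-composing the extension with $\iota$. I would also remark in passing that this is the step where the embedding hypothesis is genuinely used; an abstract ANR need not be an ENR without a closed embedding into some $\mathbb{R}^N$ available.
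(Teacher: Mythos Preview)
Your argument is correct and is essentially the paper's proof: the paper first observes that $\iota(X)$ inherits the ANR property from $X$ and then extends the identity $\iota(X)\to\iota(X)$ over an open neighbourhood in the paracompact space $\mathbb{R}^N$, which amounts to exactly your extension of $\iota^{-1}$ followed by post-composition with $\iota$. The only difference is cosmetic---you bypass the remark that $\iota(X)$ is itself an ANR by extending $\iota^{-1}$ directly.
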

\begin{proof}
Since $X$ is an ANR, so is $\iota(X) \subseteq \mathbb{R}^N$. Since $\mathbb{R}^N$ is paracompact and $\iota(X) \subseteq \mathbb{R}^N$ is closed, we can extend $f\colon \iota(X) \to \iota(X), \; x \mapsto x$ to a continuous map $r\colon U \to \iota(X)$ where $U$ is some neighbourhood of $\iota(X)$. This $r$ is a retraction.
\end{proof}
Thus, we are finished, if we can show that every topological manifold is an ANR. \\
The following theorem, which is the main ingredient for showing that every topological manifold is an ANR, is due to Hanner \cite{Han51}.
\begin{thm}[{\cite[Theorem 3.3]{Han51}}]
If a topological space $X$ is the union of open ANRs, then $X$ is an ANR.
\end{thm}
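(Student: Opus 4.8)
The plan is to verify directly the extension property defining an ANR: fix a paracompact space $P$, a closed subset $A \subseteq P$, and a continuous map $f\colon A \to X$, and produce a continuous extension $\bar f\colon W \to X$ over an open neighbourhood $W \supseteq A$. Write $X = \bigcup_{\alpha \in J} U_\alpha$ with each $U_\alpha$ open in $X$ and an ANR. I will use repeatedly the elementary fact that \emph{an open subset $U$ of an ANR $Y$ is again an ANR}: given $g\colon B \to U$ with $B$ closed in a paracompact space $Q$, first extend $g$ to $\tilde g\colon W_0 \to Y$ over an open neighbourhood, then restrict $\tilde g$ to the open set $\tilde g^{-1}(U) \supseteq B$. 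In particular every finite intersection $U_{\alpha_0} \cap \dots \cap U_{\alpha_k}$ is an ANR, and shrinking or refining the cover does not leave the class.

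The core of the proof is the \textbf{two-piece case}: if $X = U \cup V$ with $U, V$ open ANRs, then $X$ is an ANR. Given $f\colon A \to U \cup V$, use that the closed subspace $A$ of $P$ is normal (paracompact spaces are normal and normality passes to closed subspaces) to shrink $\{f^{-1}(U), f^{-1}(V)\}$ to closed sets $B_U \subseteq f^{-1}(U)$ and $B_V \subseteq f^{-1}(V)$ with $B_U \cup B_V = A$, which are closed in $P$. Extend $f|_{B_U}$ and $f|_{B_V}$ to $g_U\colon W_U \to U$ and $g_V\colon W_V \to V$ over open neighbourhoods in $P$. On $B_U \cap B_V$ both maps equal $f$ and land in $U \cap V$, so after shrinking there is an open neighbourhood $N$ of $B_U \cap B_V$ inside $W_U \cap W_V$ on which $g_U$ and $g_V$ both take values in the ANR $U \cap V$. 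Invoking the ANR property of $U \cap V$ and the tube lemma, extend the map on $(B_U \cap B_V) \times [0,1] \cup N \times \{0,1\}$ given by $(x,t) \mapsto f(x)$, by $g_U$ at $t = 0$ and by $g_V$ at $t = 1$, to a homotopy $H\colon N' \times [0,1] \to U \cap V$ from $g_U$ to $g_V$ on a smaller open neighbourhood $N'$ of $B_U \cap B_V$, stationary on $B_U \cap B_V$. Let $\beta\colon W_U \to [0,1]$ be a bump function that is $1$ near $B_U \cap B_V$ and $0$ outside a slightly smaller neighbourhood of $B_U \cap B_V$ than $N'$, and put $g_U^{\sharp} := g_U$ outside $N'$ and $g_U^{\sharp} := H(\,\cdot\,, \beta(\,\cdot\,))$ on $N'$; then $g_U^{\sharp}\colon W_U \to U$ is continuous and coincides with $g_V$ on a neighbourhood $O$ of $B_U \cap B_V$. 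Finally, separate the disjoint closed sets $B_U \setminus O$ and $B_V \setminus O$ by disjoint open sets $O_U, O_V$, and glue $g_U^{\sharp}$ over $W_U \cap (O_U \cup O)$ with $g_V$ over $W_V \cap (O_V \cup O)$: these two open sets overlap only inside $O$, where the maps agree, so their union carries a well-defined continuous map, and it is an open neighbourhood of $A = B_U \cup B_V$.

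Induction on the two-piece case shows that every \emph{finite} union of open ANRs is an ANR. For the general case, well-order $J$ and set $U'_\alpha := \bigcup_{\beta \leq \alpha} U_\beta$, an increasing chain of open sets with union $X$: at a successor, $U'_{\alpha+1} = U'_\alpha \cup U_{\alpha+1}$ is an ANR by the two-piece case, and at a limit, $U'_\alpha = \bigcup_{\beta < \alpha} U'_\beta$ is a directed union of open ANRs along open inclusions, which one shows is again an ANR by a stagewise extension argument --- pull the chain back to an increasing open cover of $A$, shrink it to an exhaustion of $A$ by closed sets, and extend $f$ over successively larger open sets, at each stage extending into the next ANR of the chain and reconciling with the map already built by the homotopy-splicing device of the two-piece case, the construction being locally finite on $P$ so that the union is continuous on an open neighbourhood of $A$. (For the application to manifolds the chain may be taken countable, since a topological $n$-manifold has a countable atlas by charts homeomorphic to open subsets of a Euclidean half-space, each of which is an ANR, so only the first $\omega$ stages --- a routine induction --- are needed.)

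The step I expect to be the real obstacle is the two-piece gluing, specifically the reconciliation of $g_U$ and $g_V$ away from $A$: they agree only on $B_U \cap B_V \subseteq A$, so one cannot just take $g_U$ on one side of a separating set and $g_V$ on the other. The hypothesis that $U \cap V$ is an ANR is used precisely here, to produce the homotopy $H$ between the two local extensions near $B_U \cap B_V$; the delicate part is the bookkeeping with the nested neighbourhoods $N \supseteq N' \supseteq O$, the disjoint $O_U, O_V$, and the bump $\beta$ so that $g_U^{\sharp}$ and $g_V$ are simultaneously defined, agree on their overlap, and together cover an open neighbourhood of $A$, together with the point-set hygiene (taking all the auxiliary open sets to be cozero sets, so that the Tietze and Urysohn theorems and the auxiliary ANR-extensions all apply on paracompact domains). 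A secondary, more clerical, difficulty is the limiting argument for uncountable covers.
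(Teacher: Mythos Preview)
The paper does not supply a proof of this theorem; it is quoted from Hanner and used as a black box. So there is no in-paper argument to compare against, and I comment on your proposal directly.

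Your overall strategy---reduce to two open pieces by a gluing argument and then induct---is the classical one, but the two-piece gluing as you have written it has a genuine gap: the spliced map need not restrict to $f$ on $A$. Take $x\in B_U\setminus B_V$ lying in the transition shell where $0<\beta(x)<1$; such points exist whenever $B_U\cap B_V$ is not open in $A$, since every neighbourhood of $B_U\cap B_V$ in $P$ then meets $B_U\setminus B_V$. At such an $x$ you have $g_U^{\sharp}(x)=H(x,\beta(x))$, and you only arranged $H(x,0)=g_U(x)=f(x)$, not $H(x,t)=f(x)$ for $t>0$, because $H$ was made stationary only on $B_U\cap B_V$. The same failure occurs for $x\in(B_U\setminus B_V)\cap O$: there the glued map equals $g_V(x)$, but $g_V$ was only required to extend $f|_{B_V}$. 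The standard repair is to force the transition region to meet $A$ only where $H$ is stationary: extend a Urysohn function $\lambda\colon A\to[0,1]$ with $\lambda\equiv 0$ on $A\setminus f^{-1}(V)$ and $\lambda\equiv 1$ on $A\setminus f^{-1}(U)$ to $\Lambda\colon P\to[0,1]$, set $P_U=\Lambda^{-1}[0,\tfrac13]$, $P_0=\Lambda^{-1}[\tfrac13,\tfrac23]$, $P_V=\Lambda^{-1}[\tfrac23,1]$, extend $f|_{A\cap(P_U\cup P_0)}$ into $U$ and $f|_{A\cap(P_0\cup P_V)}$ into $V$ (both restrictions now land in the right piece), build $H$ rel $A\cap P_0$, and use $3\Lambda-1$ as the homotopy parameter on $P_0$. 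Now the interpolation happens entirely inside $P_0$, where on $A$ both local extensions already equal $f$ and $H$ is stationary, so the glued map genuinely extends $f$.

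Your limit step is also only a sketch. For the manifold application a countable cover suffices and an $\omega$-indexed construction can be made to work, but you still have to say how the successive extensions are made compatible on overlaps so that the colimit is continuous; ``locally finite on $P$'' does not follow automatically from the construction you describe. Hanner's own proof does not go via transfinite induction on two-piece unions at all: it treats an arbitrary open cover in one pass, using a partition of unity and the nerve of the cover to assemble the local extensions, thereby sidestepping the limit-ordinal issue entirely.
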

\begin{lem}
Every open subset of an ANR is again an ANR.
\end{lem}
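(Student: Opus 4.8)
The plan is to verify the defining property of an ANR directly for an open subset $U \subseteq X$, where $X$ is an ANR. So let $P$ be a paracompact space, let $A \subseteq P$ be closed, and let $f \colon A \to U$ be continuous; I must produce a continuous extension of $f$ to some open neighbourhood of $A$ in $P$ that still takes values in $U$.

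First I would compose $f$ with the inclusion $U \hookrightarrow X$ to regard it as a continuous map $f \colon A \to X$. Since $X$ is an ANR and $P$ is paracompact with $A \subseteq P$ closed, the definition of ANR supplies an open neighbourhood $W \subseteq P$ of $A$ together with a continuous extension $\overline{f} \colon W \to X$ satisfying $\overline{f}|_A = f$. The only subtlety is that $\overline{f}$ a priori takes values in all of $X$ rather than in $U$; this is precisely where openness of $U$ enters. Set $W' := \overline{f}^{\,-1}(U)$. Since $U$ is open in $X$ and $\overline{f}$ is continuous, $W'$ is open in $W$, hence open in $P$. Moreover $A \subseteq W'$, because $\overline{f}(A) = f(A) \subseteq U$. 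Therefore $\overline{f}|_{W'} \colon W' \to U$ is a continuous extension of $f$ defined on the open neighbourhood $W'$ of $A$, which is exactly what is required.

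As $P$, $A$, and $f$ were arbitrary, this shows that $U$ is an ANR. There is essentially no serious obstacle in this argument: the whole content is the observation that the domain of an extension can always be shrunk to the preimage of the open target $U$, so passing from an ANR to one of its open subsets is automatic.
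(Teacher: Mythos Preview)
Your proof is correct and follows exactly the same approach as the paper: compose with the inclusion $U \hookrightarrow X$, apply the ANR property of $X$ to obtain an extension $\overline{f}\colon W \to X$, and then restrict to $\overline{f}^{-1}(U)$. Your write-up is in fact slightly more explicit than the paper's in justifying why $\overline{f}^{-1}(U)$ is an open neighbourhood of $A$.
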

\begin{proof}
Let $X$ be an ANR and let $U \subseteq X$ be open. Let $f\colon A \to U$ be continuous where $A \subseteq P$ is closed, $P$ is paracompact. Letting $\tilde{f} := i \circ f$, where $i\colon U \hookrightarrow X$ is the standard embedding, $\tilde{f}$ can be extended to a map $\overline{f}\colon W \to X$, where $W$ is an open neighbourhood of $A$. Then, $\overline{f}|_{\overline{f}^{-1}(U)}$ is the sought extension.
\end{proof} 
\begin{coro} \leavevmode 
\begin{enumerate} 
\item Every topological $n$-manifold $M$ is an ANR.
\item Every topological $n$-manifold $M$ is an ENR. \end{enumerate}
\end{coro}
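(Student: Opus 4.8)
The plan is to obtain both statements by assembling the results already established. For part~(1), the first step is to record that $\mathbb{R}^n$ and the closed half-space $\mathbb{H}^n := \{x \in \mathbb{R}^n \mid x_n \geq 0\}$ are ANRs. Granting this, every topological $n$-manifold $M$ --- with or without boundary --- is by definition covered by open sets, each homeomorphic to an open subset of $\mathbb{R}^n$ or of $\mathbb{H}^n$; by Lemma~4.5 each such chart domain is then an ANR, so $M$ is a union of open ANRs, and Hanner's Theorem~4.4 immediately yields that $M$ itself is an ANR. For part~(2), Corollary~3.2 furnishes a locally flat closed embedding $\iota\colon M \hookrightarrow \mathbb{R}^{2n+1}$; feeding this closed embedding together with part~(1) into Lemma~4.3 shows that $M$ is an ENR.

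The only point needing a genuinely new argument is the claim that $\mathbb{R}^n$ and $\mathbb{H}^n$ are ANRs, and I would prove the stronger fact that they are absolute retracts. Given a paracompact space $P$, a closed subset $A \subseteq P$, and a continuous map $f = (f_1,\ldots,f_n)\colon A \to \mathbb{R}^n$, one extends each coordinate $f_j\colon A \to \mathbb{R}$ to a continuous map $F_j\colon P \to \mathbb{R}$ using the Tietze Extension Theorem (Theorem~3.12); then $F := (F_1,\ldots,F_n)\colon P \to \mathbb{R}^n$ extends $f$, and is defined on all of $P$, hence certainly on an open neighbourhood of $A$. For $\mathbb{H}^n$ the same argument works once $F_n$ is replaced by $\max(F_n,0)$, which is continuous, takes values in $[0,\infty)$, and still restricts to $f_n$ on $A$. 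So both spaces are absolute retracts, in particular ANRs, and then so is every open subset of either by Lemma~4.5.

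I do not anticipate a real obstacle here: once $\mathbb{R}^n$ and $\mathbb{H}^n$ are known to be ANRs, part~(1) is carried entirely by the quoted Theorem~4.4 of Hanner, and part~(2) is a one-line consequence of part~(1), Corollary~3.2 and Lemma~4.3. The two things worth keeping in mind are that the boundary case forces the charts to be modelled on $\mathbb{H}^n$ as well as on $\mathbb{R}^n$, so the ANR property has to be checked for the half-space too, and that no condition on $M$ beyond being a union of open ANRs is needed to apply Theorem~4.4 --- while the closed embedding demanded by Lemma~4.3 is exactly what Corollary~3.2 supplies.
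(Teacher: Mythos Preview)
Your proposal is correct and follows essentially the same route as the paper: show the local model is an ANR via Tietze, pass to open subsets by Lemma~4.5, cover $M$ by such charts, apply Hanner's Theorem~4.4, and then combine with the closed embedding and Lemma~4.3 for the ENR statement. The only cosmetic difference is the choice of local model: the paper works with $I^n = [0,1]^n$ (which handles interior and boundary points simultaneously, since any chart neighbourhood is homeomorphic to an open subset of $I^n$), whereas you treat $\mathbb{R}^n$ and $\mathbb{H}^n$ separately; your version has the mild advantage of avoiding the implicit step ``$I$ ANR $\Rightarrow$ $I^n$ ANR'' that the paper passes over without comment.
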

\begin{proof}
By the Tietze Extension Theorem, the closed unit interval $I = [0,1]$ is an ANR. Thus, $I^n$ is also an ANR. Since every point $p \in M$ possesses a neighbourhood that is homeomorphic to an open subset of $I^n$, we conclude that $M$ is the union of open ANRs and thus an ANR itself by Theorem 4.4. Since $M$ admits a closed embedding $\iota\colon M \to \mathbb{R}^{2n+1}$, $M$ is also an ENR.
\end{proof}
That every topological manifold $M$ is an ENR was used by Milnor in \cite{Mil64} to investigate under which conditions $M \times \mathbb{R}^q$ admits a smooth structure for sufficiently large $q \geq 0$. This was a precursor to the smoothing and PL-ing theory of Kirby and Siebenmann \cite{KS77}. A key ingredient are \textit{microbundles.}

\begin{define}[{\cite[§2]{Mil64}}]A \emph{microbundle} $\mathfrak{X}$ is a diagram \newline
\begin{center}
\begin{tikzcd}
	B &&& E &&& B
	\arrow["i", from=1-1, to=1-4]
	\arrow["j", from=1-4, to=1-7]
\end{tikzcd}
\end{center} 
consisting of the following:

\begin{enumerate}
\item a topological space $B$ called the \emph{base space},
\item a topological space $E = E\left(\mathfrak{X}\right)$ called the \emph{total space}
\item continuous maps $i$ and $j$ such that $j \circ i = \mathrm{id}_B$. \end{enumerate} 

Furthermore, we require that there exists an $r \geq 0$, such that for every $b \in B$, there exists an open neighbourhood $U$ of $b$ and an open neighbourhood $V$ of $i(b)$, with \begin{align*}i(U) \subseteq V, ~~ & j(V) \subseteq U, \end{align*} such that $V \cong U \times \mathbb{R}^r$ under a homeomorphism, such that the following diagram commutes: \newline

\begin{center}
\begin{tikzcd}
	&& V \\
	U &&&& U \\
	&& {U \times \mathbb{R}^r}
	\arrow["{i|_U}", from=2-1, to=1-3]
	\arrow["{\mathrm{id}_U \times 0}"', from=2-1, to=3-3]
	\arrow["{\cong }", from=1-3, to=3-3]
	\arrow["{\mathrm{pr}_1}"', from=3-3, to=2-5]
	\arrow["{j|_V}", from=1-3, to=2-5]
\end{tikzcd}
\end{center}
We call $r$ the \emph{fibre dimension} of $\mathfrak{X}$.
\end{define}

\begin{define}[{\cite[§5]{Mil64}}]
Let $M$ and $N$ be topological manifolds such that $M \subseteq N$ is a submanifold of $N$. We say that $M$ possesses a \emph{microbundle neighbourhood} in $N$ if there exists a neighbourhood $U$ of $M$ in $N$ and a retraction $r\colon U \to M$, such that the diagram 
\begin{center}
\begin{tikzcd}
	M &&& U &&& M
	\arrow["\mathrm{incl}", from=1-1, to=1-4]
	\arrow["r", from=1-4, to=1-7]
\end{tikzcd}
\end{center} 
constitutes a microbundle $\mathfrak{n}$. We call $\mathfrak{n}$ a \emph{normal microbundle} of $M$ in $N$.
\end{define}
We notice immediately that every embedded smooth manifold $M \subseteq \mathbb{R}^N$ possesses a normal microbundle. Any tubular neighbourhood $U$ is a microbundle neighbourhood of $M$. The following result by Stern yields that every locally flat embedded topological $n$-submanifold $M \subseteq \mathbb{R}^{2n+1}$ admits a normal microbundle.

\begin{thm}[{\cite[Theorem 4.5]{Ste75}}]
Let $N$ be a topological $n$-manifold and let $M$ be a topological $m$-manifold, such that $M \subseteq N$ is a locally flat submanifold of $N$. Let $q = n - m$.
\begin{enumerate}
\item If $m \leq q + 1$ and $q \geq 5$, then there exists a normal microbundle $\mathfrak{n}$.
\item Pick $j \in \{1,2\}$. If $m \leq q + j + 1$ and $q \geq 5 + j$, then there exists a normal microbundle~$\mathfrak{n}$.
\end{enumerate}
\end{thm}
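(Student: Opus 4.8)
The plan is to globalise the local normal microbundles that local flatness already provides. By Definition~1.3, every point $p \in M$ has a chart of $N$ identifying the pair $(N,M)$ near $p$ with $(\mathbb{R}^m\times\mathbb{R}^q,\ \mathbb{R}^m\times 0)$, or with a half-space version when $p \in \partial M$, and on such a chart the linear projection onto $\mathbb{R}^m$ is an honest normal microbundle. Hence $M$ is covered by open sets over which normal microbundles exist, and the whole problem is to patch these together over a single neighbourhood of $M$ in $N$. I would run this as an obstruction-theoretic induction: fix a handle decomposition of a neighbourhood of $M$ in $N$ (equivalently, build the microbundle over a CW structure on $M$), starting from the $0$-handles where local flatness supplies the initial data, and extend the normal microbundle one handle at a time.

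Suppose inductively that a normal microbundle has been constructed over all handles of index $<k$ and over the attaching tube $S^{k-1}\times D^{m-k}$ of a $k$-handle $D^k\times D^{m-k}$. Extending across the handle is a relative extension problem for fibre-preserving thickenings of $D^k\times 0$ inside $D^k\times\mathbb{R}^q$, and its obstruction is a class in $\pi_{k-1}$ of the space $\mathcal{N}_q$ of germs at a point of normal microbundles of $\mathbb{R}^m\times 0$ in $\mathbb{R}^m\times\mathbb{R}^q$; by the Kister--Mazur theorem one may equally take germs of $\mathrm{TOP}(q)$-bundle structures. Since this germ space does not depend on the chosen point of $M$, the situation is exactly ordinary obstruction theory for a section of a fibration over $M$, with obstructions living in $H^{k}(M;\pi_{k-1}(\mathcal{N}_q))$ for $0 \leq k \leq m$. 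The essential input is a connectivity estimate: $\mathcal{N}_q$ becomes highly connected as $q$ grows, a stability statement proved with the Kirby--Siebenmann handle-straightening machinery, the local contractibility of homeomorphism groups (Chernavskii, Edwards--Kirby), and Kister--Mazur. Granting the sharp form of this estimate --- the connectivity of $\mathcal{N}_q$ is where the Kirby--Siebenmann machinery enters, and extracting the sharp bound is the technical core of \cite{Ste75} --- all the relevant obstruction groups vanish as soon as $m \leq q+1$ and $q \geq 5$, and the shifted hypotheses with $j\in\{1,2\}$ correspond to the analogous vanishing once one extra unit of connectivity is available. This yields the global normal microbundle $\mathfrak{n}$.

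The hard part is carrying out both the handle induction and the connectivity estimate purely topologically. In the smooth category one has tubular neighbourhoods, the exponential map and immersion theory with structure group $O(q)$, and the corresponding statement is classical obstruction theory; none of that is available here, and the substitute is the full Kirby--Siebenmann programme --- the torus trick to produce and straighten handles, ``concordance implies isotopy'' for the fibre-preserving thickenings, and the $s$-cobordism theorem applied inside the auxiliary manifolds that arise. This is exactly why $q \geq 5$ is needed (those auxiliary manifolds have dimension of order $q$), and why the metastable-type inequality $m \leq q+1$, i.e.\ roughly $2m \leq n+1$, is forced: the fibred general-position and handle-exchange steps demand that the fibre dimension $q$ dominate the base dimension $m$. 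Pinning down the constants so that the connectivity of $\mathcal{N}_q$ matches the stated inequalities is the real content, and it is for this bookkeeping that I would cite \cite{Ste75} directly.

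Finally, the same obstruction theory run one degree higher, with obstructions in $H^{k}(M;\pi_{k}(\mathcal{N}_q))$, gives uniqueness of $\mathfrak{n}$ up to concordance in the same ranges; this is not needed for the statement above but explains why the two parts of the theorem are phrased uniformly in $j$.
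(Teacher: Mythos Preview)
The paper does not prove this theorem at all: it is quoted verbatim as \cite[Theorem 4.5]{Ste75} and used as a black box in the proof of Corollary~4.10. There is therefore no ``paper's own proof'' to compare your proposal against.

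That said, your sketch is a reasonable outline of the kind of obstruction-theoretic argument Stern actually runs, and you correctly identify the ingredients (local flatness gives local normal microbundles, Kister--Mazur, Kirby--Siebenmann handle straightening, connectivity of the relevant germ space) and the reason for the dimension restrictions. But it remains a sketch: the ``essential input'' you flag --- the precise connectivity estimate for $\mathcal{N}_q$ --- is exactly the hard content of \cite{Ste75}, and you do not supply it. So as a self-contained proof your proposal has a genuine gap at the point where you write ``granting the sharp form of this estimate''; as a summary of Stern's strategy it is broadly accurate. For the purposes of this paper, however, simply citing \cite{Ste75} is what is done and what suffices.
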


\begin{coro}
Let $M$ be a topological $n$-manifold. Then $M$ admits a locally flat embedding $\iota\colon M \hookrightarrow \mathbb{R}^{2n+1}$, such that $\iota(M)$ possesses a microbundle in $\mathbb{R}^{2n+1}$.
\end{coro}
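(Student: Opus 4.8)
The plan is to obtain the statement by combining the two main results already assembled in Sections~3 and~4, namely Corollary~3.2 (a topological $n$-manifold admits a locally flat closed embedding into $\mathbb{R}^{2n+1}$) and Theorem~4.9 (Stern's existence theorem for normal microbundles), and then to clean up the finitely many low-dimensional cases in which Stern's codimension hypothesis is not met.

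First I would invoke Corollary~3.2 to fix a locally flat closed embedding $\iota\colon M\hookrightarrow\mathbb{R}^{2n+1}$. Put $N:=\mathbb{R}^{2n+1}$, a topological $(2n+1)$-manifold without boundary; then $\iota(M)\subseteq N$ is a locally flat submanifold with $m:=\dim M=n$ and codimension $q:=\dim N-\dim M=(2n+1)-n=n+1$. Since $m=q-1$, the ``dimension'' hypotheses of Theorem~4.9 — $m\le q+1$ in part~(1), and $m\le q+j+1$ in part~(2) — hold automatically, so everything hinges on the lower bound on $q$. For $n\ge 4$ we have $q=n+1\ge 5$, so part~(1) of Theorem~4.9 applies to $\iota(M)\subseteq\mathbb{R}^{2n+1}$ and produces a normal microbundle $\mathfrak{n}$; by Definition~4.8 the total space of $\mathfrak{n}$ is precisely a microbundle neighbourhood of $\iota(M)$ in $\mathbb{R}^{2n+1}$, which is what the statement demands.

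It remains to treat $n\in\{0,1,2,3\}$. Here I would use the classical fact that every topological manifold of dimension at most $3$ carries a smooth structure (trivially for $n\le 1$, and by the surface/three-manifold smoothing theorems for $n=2,3$). Fixing such a structure, Theorem~1.1 supplies a closed \emph{smooth} embedding $\iota\colon M\hookrightarrow\mathbb{R}^{2n+1}$, which is in particular locally flat in the sense of Definition~1.3; and, as recalled in the introduction and again just after Definition~4.8, a closed smooth submanifold of Euclidean space has a tubular neighbourhood $U$ together with a retraction $r\colon U\to\iota(M)$, and the diagram $\iota(M)\hookrightarrow U\to\iota(M)$ is a microbundle. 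Thus $\iota(M)$ possesses a microbundle neighbourhood in $\mathbb{R}^{2n+1}$ in these cases as well, which finishes the proof.

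The only genuine obstacle is bookkeeping around the hypotheses of Theorem~4.9: because Stern's theorem applies only in codimension $q\ge 5$, while an embedding into $\mathbb{R}^{2n+1}$ has codimension merely $q=n+1$, the case $n=4$ is the first one reachable by the topological machinery developed here, and the manifolds of dimension at most $3$ have to be absorbed into the separate smoothing argument rather than deduced from the earlier sections. No new ideas beyond those already in the paper are required.
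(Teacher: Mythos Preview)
Your proposal is correct and follows essentially the same route as the paper: split into $n\ge 4$, where Corollary~3.2 together with Theorem~4.9(1) applies (you correctly verify $q=n+1\ge 5$ and $m=n\le q+1$), and $n\le 3$, where one passes to a smooth structure and uses the tubular neighbourhood theorem. The paper's proof is terser but makes the identical case division and invokes the identical ingredients.
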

\begin{proof}
If $n \leq 3$, then $M$ admits a smooth structure (\cite{Hat13} and \cite[Theorem 2]{Ham76} together with \cite[Theorem 2.10]{Mun64}). Thus, $M$ admits a smooth embedding $\iota\colon M \hookrightarrow \mathbb{R}^{2n+1}$. The smooth submanifold $\iota(M)$ admits a normal microbundle by the Tubular Neighbourhood Theorem (\cite[Theorem 6.24 and Proposition 6.25]{Lee13}). If $n \geq 4$, the result follows from Theorem \mbox{4.9}.
\end{proof} 
\bibliographystyle{alpha}
\bibliography{bib}

\end{document}